\documentclass{article}
\usepackage{graphicx} 
\usepackage{amsmath,ulem}
\usepackage{mathrsfs} 
\usepackage{xcolor}
\usepackage{enumitem}
\usepackage{comment}
\usepackage{hyperref}
\usepackage{tikz-cd}
\usepackage{amssymb}
\usepackage{textcomp}
\usepackage{hyperref}
\usepackage[english]{babel}
\usepackage{amsthm}
\DeclareMathOperator{\inv}{inv}
\theoremstyle{definition}
\newtheorem{definition}{Definition}[section]
\newtheorem*{remark}{Remark}
\newtheorem*{notation}{Notation}
\newtheorem{theorem}{Theorem}[section]
\newtheorem{lemma}[theorem]{Lemma}

\newtheorem{corollary}{Corollary}[theorem]

\newtheorem{claim}[theorem]{Claim}
\newtheorem{fact}[theorem]{Fact}

\newcommand{\ignore}[1]{}
\title{Enclosing a Compact Set in an O-minimal Expansion of $(\mathbb{R},+,\cdot,0,1 <)$}
\author{Yayi Fu}
\date{}
\begin{document}
\maketitle
\begin{abstract}
Fix an o-minimal expansion $\mathcal{R}=(\mathbb{R},+,\cdot,0,1 <,...)$ of the real ordered field.
    Given $C^1$ functions $f_1,...,f_k$, 
    $g_1,...,g_k:M\to\mathbb{R}$ on a definable cell $M$,
    let $h_{i,0}$ denote $f_i$ and  $h_{i,1}$ denote $g_i$.
   Suppose that  for all $\tau\in 2^{[k]}$,
      $H_{\tau}=(h_{1,\tau(1)},..,h_{k,\tau(k)})
      :M\to \mathbb{R}^k$ is regular and proper on $M$,
      and that for all $i\in[k]$,  
      $\{f_i=0\}$ and $ \{g_i=0\}$ are connected, and
      $\{f_i=0\}\cap \{g_i=0\}=\emptyset$.
   We show that then there exists
   a sequence $(\square_{i,\epsilon}:i\in[k],\epsilon \in \{0,1\})\in\{\leq ,\geq \}^{[k]\times\{0,1\}}$ 
   such that    the enclosed region $\underset{i\in[k]}{\bigcap}
   \{ f_i\square_{i,0}
   0\} 
   \cap  
   \{ g_i\square_{i,1}
   0\}
   $ 
   is compact.
\end{abstract}

\section{Introduction}\label{secintro}
\indent

In geometry, some level sets like $\{(x,y)\in\mathbb{R}^2:x^2+y^2=1\}$ obviously enclose a compact region.

But what about level sets that are not bounded, and what if we want to  enclose a region by multiple level sets?
What can we say about the enclosed region and when is it compact?

The easiest example to think of in $\mathbb{R}^k$ is the coordinate planes $\{x_1=c_1\}, \{x_1=d_1\},...,\{x_k=c_k\},
\{x_k=d_k\}$, with $c_1,d_1...,c_k,d_k\in\mathbb{R}$ and $c_1\leq d_1,..., c_k\leq d_k$ .
For more general level sets, an immediate problem is that
the level sets may not even intersect.
For example, in $\mathbb{R}^2$, the $x$-axis $\{y=0\}$ and the level set $\{xy=1\}$ do not intersect and for the level sets   $\{y=0\}$, $\{y=1\}$, 
 $\{x=0\}$, $\{xy=1\}$,   
there's no enclosed region.

If the level sets intersect, how to control the intersection?
On one hand, to simplify the problem, we want the intersection of the level sets to be manifolds.
This can be done by assuming that the functions defining the level sets are regular.
On the other hand, we want to exclude bad examples like $\{y=0\}\cap\{y=\sin (x)\}$  in $\mathbb{R}^2$, 
where even the boundaries defined explicitly  by intersecting the level sets have infinitely many components and are difficult to study.
The finiteness for the components of the intersection of the level sets holds for any sets definable in an o-minimal expansion of the real ordered field $(\mathbb{R},+,\cdot,0,1 <)$.
Hence, we may expect that the problem would be simpler in an o-minimal setting.

Also, since it is already proved that many geometric properties are tame in  the o-minimal context, 
e.g. cell-decomposition, finite components,
curve selection, triangulation...
(see \cite{van1998tame}),
we would like to study the problem in the framework of o-minimal structure,
in the hope that the tameness of  an o-minimal structure can simplify the problem, 
and hence can  give some first-step intuition about the problem.

On the other hand, since we have seen a lot of o-minimal theorems of the type 
that in the o-minimal setting, 
stronger geometric properties hold,
it's natural to ask the reverse direction:
Can we find some geometric properties in the tame o-minimal context first,
and then see whether such properties or some weaker versions hold in a more general context?
The motivation is that since  o-minimality is often  associated with nice properties,
we expect that we can have a rich pool of properties that 
are more obvious  in the o-minimal setting, but less intuitive in general,
so we would like to use o-minimality as an intermediate step that may allow us to find more properties.

In this paper, we show the following geometric property that holds in any o-minimal expansion $\mathcal{R}=(\mathbb{R},+,\cdot,0,1 <,...)$:
\begin{theorem}
    \label{s1mainthm}
    
     Let $M$ be a $k$-cell in $\mathbb{R}^n$.
      Let $f_1,...,f_k,g_1,...,g_k$ be $C^1$ real-valued functions on $M$  definable in $\mathcal{R}$.
       Let $h_{i,0}$ denote $f_i$ and  $h_{i,1}$ denote $g_i$.
      Suppose that
      
      \begin{enumerate}
          \item [(a)]
    for all $\tau\in 2^{[k]}$,
      $H_{\tau}=(h_{1,\tau(1)},..,h_{k,\tau(k)})
      :M\to \mathbb{R}^k$ is regular on $M$;
     
      \item[(b)] 
    
     for all $\tau\in 2^{[k]}$,
      $H_{\tau}=(h_{1,\tau(1)},..,h_{k,\tau(k)})
      :M\to \mathbb{R}^k$ is proper;
      \item [(c)] for all $i\in[k]$, 
      $\{f_i=0\}$ and $ \{g_i=0\}$ are connected;
   \item [(d)]
   for all $i\in[k]$, 
      $\{f_i=0\}\cap \{g_i=0\}=\emptyset$.
      \end{enumerate}
 
      Then there exists a sequence $(\square_{i,\epsilon}:i\in[k],\epsilon \in \{0,1\})\in\{\leq ,\geq \}^{[k]\times\{0,1\}}$ 
   such that  for all $J\subseteq [k]$ and all $\tau\in2^{[k]}$,
   the enclosed region 
\[\bigcap_{i\in [k]\setminus J}
\{ f_i\square_{i,0}
   0\}
   \cap 
   \{ g_i\square_{i,1}
   0\}\cap 
\bigcap_{l\in J}\{h_{l,\tau(l)}=0\}
  \]
  is compact.

  In particular, if $J=\emptyset$
  then $\underset{i\in[k]}{\bigcap}\{ f_i\square_{i,0}
   0\}$ 
   $\cap $
   $\{ g_i\square_{i,1}
   0\}$ is compact.

\end{theorem}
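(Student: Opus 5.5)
The plan is to reduce everything to the preimage structure under the proper regular maps $H_\tau$ and choose the signs so that, in each coordinate direction, the region lies "between" the two level sets $\{f_i=0\}$ and $\{g_i=0\}$.

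\textbf{Step 1: sign data.} By (d) and (c), for each $i$ the connected set $\{g_i=0\}$ misses $\{f_i=0\}$, so $f_i$ has constant sign on $\{g_i=0\}$. Likewise $g_i$ has constant sign on $\{f_i=0\}$. We may assume both level sets are nonempty: if one is empty, the corresponding function has constant sign on the connected cell $M$, and we choose the inequality violated by that sign so the region is empty. We then choose $\square_{i,0}$ so that $\{f_i\square_{i,0}0\}$ is the side of $\{f_i=0\}$ containing $\{g_i=0\}$, and $\square_{i,1}$ so that $\{g_i\square_{i,1}0\}$ is the side of $\{g_i=0\}$ containing $\{f_i=0\}$. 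The intended "slab" is $S_i=\{f_i\square_{i,0}0\}\cap\{g_i\square_{i,1}0\}$. Its topological boundary in $M$ lies in $\{f_i=0\}\cup\{g_i=0\}$.

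\textbf{Step 2: compactness via properness.} The region $R_J$ in the statement is closed in $M$, since it is a finite intersection of closed sets. Because $H_\tau$ is proper, it suffices to show that $R_J$ lies in $H_\tau^{-1}(K)$ for a compact $K\subseteq\mathbb{R}^k$ and some $\tau$. Closedness in $M$ together with containment in a compact subset of $M$ then gives compactness.

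Concretely, the aim is to show that on $R_J$ each $|f_i|$ is bounded. The natural tool is the restriction of $f_i$ to the fibres $L=\bigcap_{l\ne i}\{h_{l,\tau(l)}=c_l\}$. By (a) these are $1$-dimensional definable $C^1$ manifolds. By (b), together with o-minimality, they have finitely many components, and each component is a proper curve on which $f_i$ is a local homeomorphism onto an interval. Along such a curve, the set $S_i\cap L$ is the part between a zero of $f_i$ and a zero of $g_i$, and one wants this to be a compact arc. This needs a statement of the form: on each proper component, $f_i$ and $g_i$ are monotone in the parameter, and the zero of $g_i$ lies on the correct side of the zero of $f_i$ by the choice in Step 1.

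\textbf{Step 3: induction on $k-|J|$.} The base case $J=[k]$ is immediate: the region is $H_\tau^{-1}(0)$, which is compact. For the inductive step, drop one index from $J$. We then need to show that the new region $R$ is bounded in $M$. Suppose instead that $R$ is unbounded or approaches the frontier of $M$. By curve selection, take a definable curve $\gamma(t)$ in $R$ escaping every compact set. By properness of $H_\tau$ for every $\tau$, some coordinate $h_{i,\epsilon}(\gamma(t))$ tends to $\pm\infty$. The sign conditions of the slab then force $\gamma$ to cross $\{f_i=0\}$ or $\{g_i=0\}$ in the wrong order. Equivalently, we get a curve connecting the unbounded part to $\{g_i=0\}$ inside the closed slab. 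Combining this with the compact boundary pieces, which are compact by the induction hypothesis because they are regions with $i$ added to $J$, yields a contradiction with connectedness of the level sets.

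\textbf{Main obstacle.} I expect the hardest part to be this unbounded-curve argument. It must rule out $f_i\to+\infty$ and $g_i\to-\infty$ happening simultaneously inside a slab when the functions are not globally monotone along fibres. I would first try to prove that for a single $i$, the map $(f_i,g_i)$ restricted to any proper $1$-dimensional fibre has $f_i$ and $g_i$ strictly monotone in the same or opposite sense. This would use regularity of both $H_\tau$ and $H_{\tau'}$, where $\tau'$ differs from $\tau$ only at $i$, together with connectedness from (c) to make the orientation constant.
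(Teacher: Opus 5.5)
Your Step 1 is exactly the paper's sign choice. Theorem \ref{exim} reads off the sign of $f_j$ at the zero of $g_j$ on one curve, which by (c) and (d) agrees with your global choice. The empty-level-set branch is vacuous, since regularity and properness make each coordinate surjective. Your induction skeleton, with base case $H_\tau^{-1}(0)$, is right. Your Step 2 at fibre level $c=0$ is the paper's one-dimensional case (Corollary \ref{uniqint}, Lemma \ref{ubtocm1d}). On each component of $\bigcap_{l\neq i}\{h_{l,\tau(l)}=0\}$, $f_i$ and $g_i$ are regular (Lemma \ref{reglev} for $H_\tau$ and for $H_{\tau'}$, $\tau'$ differing from $\tau$ only at $i$) and proper. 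Hence they are strictly monotone onto $\mathbb{R}$, and the slab is the compact arc between their unique zeros.

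The gap is the inductive step in dimension $\geq 2$. Fibrewise compactness gives no uniform bound on $|f_i|$ over $R_J$. On the fibre with parameters $c=(c_l)_{l\neq i}$, the arc ends at the zero of $g_i$ on that fibre, whose $f_i$-value depends on $c$. Bounding $c$ on $R_J$ is the same problem for the other indices. For the same reason, an escaping curve in the slab with $f_i\to+\infty$ is not contradictory by itself. Two of your assertions have no argument behind them: that the sign conditions ``force $\gamma$ to cross in the wrong order'', and that the boundary pieces ``yield a contradiction with connectedness of the level sets''. The obstacle you propose to attack, monotonicity along one-dimensional fibres, is the one-dimensional statement again. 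It does not address the interaction between different indices that the higher-dimensional step needs.

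The missing idea is the paper's Lemma \ref{kd}, built on Lemma \ref{compointbdry}: a clopen-plus-IVT argument near infinity. Fix a component $C$ of $\bigcap_{l\in J}\{h_{l,\tau(l)}=0\}$ with $\dim C=k-|J|\geq 2$, and let $A$ be the region inside $C$.
\begin{enumerate}
\item By induction, the boundary pieces are compact. These are the regions with one more index $j$ put into $J$, with $f_j=0$ or $g_j=0$.
\item Hence, for large $r$, the set $D=C\cap B_{\geq r}(0)$ is connected, and $A\cap D$ meets no $\{f_i=0\}$ or $\{g_i=0\}$, since such a point would lie in a compact boundary piece.
\item Drop one inequality, say $g_j\square_{j,1}0$ for some $j\notin J$, to get a set $X\supseteq A\cap D$ that is closed in $D$.
\item A component of $X$ avoiding all remaining zero sets lies in the corresponding strict region, which is open in $D$. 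So that component is clopen in $D$ and equals $D$. This is impossible: for $i\notin J\cup\{j\}$, $\{f_i=0\}\cap C$ is unbounded (Lemmas \ref{ubdom}, \ref{whoim}) and so meets $D$.
\item Hence the component of $X$ through any $a\in A\cap D$ contains a point $z$ of a remaining zero set. Since $z\notin A$, $g_j(z)$ lies strictly on the wrong side.
\item IVT for $g_j$ on that component gives $w$ with $g_j(w)=0$, so $w\in A\cap D\cap\{g_j=0\}$, a contradiction.
\end{enumerate}
Thus $A\subseteq B_{<r}(0)$, and closedness gives compactness. Your proposal lacks the two inputs this needs. One is the connectedness of $C$ near infinity, which is false for curves; that is why dimension one is treated separately. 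The other is the unboundedness of the zero sets inside $C$. Hypothesis (c) is used only for the sign choice.
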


     Notice that this is different from saying that each $h_{i,\tau(i)}$ is proper.
      For example, think of $f_1,...,f_k,g_1,...,g_k$ where $f_i$ is the $i$-th coordinate projection and $g_i$ is $f_i+1$.
      The hypothesis of the theorem is satisfied,
      but none of the $f_i$'s and the $g_i$'s is proper.

Section \ref{secintro} is introduction and notations.
     
Section \ref{stransint} gives background about model theory and geometry that we will use.

Section \ref{spf} is the proof for Theorem \ref{s1mainthm}.

\begin{notation}\mbox{}
    \begin{itemize}
    \item For $k\in\mathbb{N}$, $[k]$ denotes the  $\{1,...,k\}$.
        \item Let $D\subseteq \mathbb{R}^k$.
        $B_{\square \delta}^D(x)$ denotes $\{y\in D: \|x-y\|\square  \delta\}$,
        where $\square \in\{<,>,\leq ,\geq,=\}$.
        \item Given functions $f:D\to \mathbb{R}$ and $g:D\to \mathbb{R}$,
        $\{f\square g\}$ denotes 
        the set 
        $\{x\in D: f\square g\}$,
        where $\square \in\{<,>,\leq ,\geq,=\}$.
        \item Given $\square \in\{\leq ,\geq\}$, $\square'$ denotes the corresponding symbol without equality. e.g. if $\square$ is $\leq $, then $\square'$ is $<$.
        \item  $\inv:\{<,>,\leq ,\geq\}\to \{<,>,\leq ,\geq\}$ is the function that reverses the direction of the symbol. 
        e.g. $\inv(\leq )$ is $\geq $.
      
        \item  Given a manifold $M$ and $x\in M$,
        $T_x(M)$ denotes the tangent space of $M$ at $x$.
        \item Given $C\subseteq\mathbb{R}^n$ and  a function $f:C\to\mathbb{R}$,
        $\underset{\|x\|\to\infty}{\lim}
    f(x)=+\infty$  means that
    for all $r\in\mathbb{R}$ there is $N\in\mathbb{R}$ such that for all $x\in C$ with $\|x\|\geq N$, $f(x)>r$.
     $\underset{\|x\|\to\infty}{\lim}
    f(x)=-\infty$ is defined similarly.
    \end{itemize}
\end{notation}
\section{Preliminaries}
\label{stransint}
\subsection{O-minimal Background}
\indent

We give a brief list of background knowledge we will use about o-minimal structures. 
For more basic knowledge about o-minimal theory, see e.g. \cite{van1998tame} and \cite{coste1999introduction}.
\begin{definition}
  An \textit{o-minimal expansion}  of 
  the   ordered field 
  $(\mathbb{R},+,\cdot,0,1 <)$ is an 
  expansion $\mathcal{R}=(\mathbb{R},+,\cdot,0,1 <,...)$ 
  where each one-variable definable 
  (with parameters) set is a finite union of points and intervals.
\end{definition}
\begin{fact}
   \cite[Chapter~3~(2.11) CELL DECOMPOSITION THEOREM.]{van1998tame}
    
\begin{enumerate}
    \item [(I)] 
    Given any definable sets $A_1,...,A_k\subseteq \mathbb{R}^k$, there is a decomposition of $\mathbb{R}^k$ partitioning each of $A_1,...,A_k$.
\item [(II)] For each definable function $f: A \to \mathbb{R}$, $A \subseteq  \mathbb{R}^k$, there is a decomposition
$\mathcal{D}$ of $\mathbb{R}^k$ partitioning $A$ such that the restriction $f|_B: B \to \mathbb{R}$ to each cell $B\in\mathcal{D}$ with $B\subseteq A$ is continuous.
\end{enumerate}
\end{fact}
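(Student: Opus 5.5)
The plan is the standard simultaneous induction on $k$ from \cite{van1998tame}: I would prove (I)$_k$ and (II)$_k$ together, deriving (I)$_k$ from (I)$_{k-1}$ and (II)$_{k-1}$, and then (II)$_k$ from (I)$_k$ and (II)$_{k-1}$. Two auxiliary one-variable facts would be established first:
\begin{itemize}
\item the \emph{monotonicity theorem}: a definable $f:(a,b)\to\mathbb{R}$ is, off finitely many points, continuous and either constant or strictly monotone on each subinterval;
\item \emph{uniform finiteness}: if $A\subseteq\mathbb{R}^{k-1}\times\mathbb{R}$ is definable and every fiber $A_x$ is finite, then there is a bound $N$ with $|A_x|\le N$ for all $x$.
\end{itemize}

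For $k=1$, statement (I)$_1$ is immediate from o-minimality: the boundary points of $A_1,\dots,A_k$ form a finite set, and the points and open intervals they determine form a decomposition. Statement (II)$_1$ is exactly the monotonicity theorem. I would prove monotonicity in the usual way. First, for any interval there is a subinterval on which $f$ is constant or injective, by o-minimality applied to the fibers $f^{-1}(y)$. Next, an injective $f$ is strictly monotone on a subinterval, by applying o-minimality to the definable sets of points that are local maxima, local minima, and so on. Finally, a strictly monotone $f$ is continuous on a subinterval. The set of bad points is then definable and has no interior, so it is finite.

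For the inductive step to (I)$_k$, I would first apply o-minimality fiberwise: for $x\in\mathbb{R}^{k-1}$, the boundaries of the fibers $(A_j)_x$ are finite sets. Uniform finiteness bounds their size by some $N$. This lets me define, for $i\le N$, partial functions $\xi_i(x)$ equal to the $i$-th boundary point of $\bigcup_j \partial (A_j)_x$. I would apply (II)$_{k-1}$ and (I)$_{k-1}$ to obtain a decomposition of $\mathbb{R}^{k-1}$ into cells $C$ with three properties on each $C$: the number of boundary points is constant, each $\xi_i$ is continuous, and the membership pattern of each $A_j$ relative to the graphs and bands is constant. The graphs of the $\xi_i$ over $C$ and the bands between consecutive graphs are then cells, and together they give the required decomposition of $\mathbb{R}^k$. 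The proof of uniform finiteness itself would follow a ``good point / bad point'' argument over the base, using (I)$_{k-1}$, (II)$_{k-1}$ and the finiteness of fibers.

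For (II)$_k$, I would use (I)$_k$ to reduce to $f$ defined on a single open cell; lower-dimensional cells are handled by projecting homeomorphically to a lower-dimensional open cell and applying the inductive hypothesis. On an open cell, I would show that the definable set of discontinuity points has empty interior. On a box, I would use monotonicity in the last variable together with (II)$_{k-1}$ on the other variables to get a sub-box on which $f$ is continuous in each variable separately, with uniform monotonicity, and hence jointly continuous. I would then decompose again and recurse on the lower-dimensional bad set. I expect two places to be the main obstacles: uniform finiteness, and the separate-to-joint continuity step in (II)$_k$, since both require the careful good-point arguments rather than a direct fiberwise application of o-minimality.
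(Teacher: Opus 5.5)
Your outline is correct: it is the standard simultaneous induction on dimension from van den Dries, Chapter 3, which runs through the monotonicity theorem, uniform finiteness, fiberwise boundary functions for (I), and separate-to-joint continuity on boxes for (II). The paper does not prove this Fact itself but cites exactly that source, so your route is the same one it relies on.
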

\begin{fact}
   In an o-minimal expansion $\mathcal{R}$ of the real ordered field $\mathbb{R}$,
   every definable  set 
   $A\subseteq \mathbb{R}^k$
   has finite connected components.
\end{fact}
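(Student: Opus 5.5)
The plan is to derive the finiteness of connected components directly from the Cell Decomposition Theorem (part (I) of the preceding Fact), combined with the observation that every cell is connected. By (I), applied to the single definable set $A\subseteq\mathbb{R}^k$, there is a decomposition $\mathcal{D}$ of $\mathbb{R}^k$ into finitely many cells that partitions $A$. So $A=C_1\cup\dots\cup C_m$ for finitely many cells $C_j\in\mathcal{D}$. If each $C_j$ is connected, then each $C_j$ lies inside a single connected component of $A$. Every component of $A$ meets some $C_j$, hence contains it. Therefore $A$ has at most $m$ connected components.

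The remaining step is to show that every cell is connected. I would do this by induction on the ambient dimension, following the recursive definition of cells. In $\mathbb{R}^1$ a cell is a point or an open interval, which is connected. Now let $C\subseteq\mathbb{R}^{k+1}$ be a cell lying over a cell $D\subseteq\mathbb{R}^k$; by the induction hypothesis $D$ is connected. There are two cases.
\begin{itemize}
\item If $C$ is the graph of a continuous definable $f:D\to\mathbb{R}$, then $x\mapsto(x,f(x))$ is a homeomorphism from $D$ onto $C$. Hence $C$ is connected.
\item If $C=\{(x,t):x\in D,\ f(x)<t<g(x)\}$ is a band, with $f<g$ continuous and possibly $f=-\infty$ or $g=+\infty$, I would exhibit a homeomorphism $D\times(0,1)\to C$. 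In the bounded case take $(x,s)\mapsto(x,(1-s)f(x)+sg(x))$. When one or both bounds are infinite, use analogous maps built from an order-preserving homeomorphism $(0,1)\cong\mathbb{R}$, for example $s\mapsto \tan(\pi(s-\tfrac12))$; definability is not needed for this topological argument. Since $D\times(0,1)$ is a product of connected spaces, $C$ is connected.
\end{itemize}

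The main, mildly delicate point is the band case. One must check that the map is a bijection with continuous inverse. In the bounded case the inverse is $(x,t)\mapsto\bigl(x,(t-f(x))/(g(x)-f(x))\bigr)$, which is continuous because $f$ and $g$ are continuous and $f<g$. The cases with infinite bounds are handled in the same way. Once cells are known to be connected, the counting argument in the first paragraph completes the proof.
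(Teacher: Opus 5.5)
Your proof is correct. It is essentially the standard argument: the paper states this Fact without proof and relies on the Cell Decomposition Theorem from van den Dries's book, where finiteness of components comes from exactly your two steps. First, $A$ is written as a finite union of cells; second, each cell is shown connected by induction, using a homeomorphism of the graph of $f$ with $D$ and of the band over $D$ with $D\times(0,1)$.
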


 \begin{lemma}\label{locconn}
     Every $D\subseteq\mathbb{R}^n$ definable in $\mathcal{R}$ is locally connected.
 \end{lemma}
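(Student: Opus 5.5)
We need to show that every definable $D\subseteq\mathbb{R}^n$ is locally connected. That means: for every $x\in D$ and every neighbourhood $U$ of $x$ in $D$, there is a connected neighbourhood $V$ of $x$ in $D$ with $V\subseteq U$. The plan is to build such a $V$ directly from a small ball, using the fact that definable sets have finitely many connected components.

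Fix $x\in D$ and $\delta>0$. Consider the definable set $B^D_{<\delta}(x)=\{y\in D:\|x-y\|<\delta\}$. It is definable, being the intersection of $D$ with a definable open ball. By the Fact that definable sets have finitely many connected components, write it as $C_1\cup\dots\cup C_m$ with each $C_j$ connected. Let $C_1$ be the component containing $x$. We will show that $C_1$ is a neighbourhood of $x$ in $D$.

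Suppose not. Then for every $\eta\in(0,\delta)$ there is a point $y\in D$ with $\|x-y\|<\eta$ and $y\notin C_1$. Such a $y$ lies in $B^D_{<\delta}(x)$, so it lies in some $C_j$ with $j\neq 1$. Since there are only finitely many $C_j$, some single $j\neq1$ contains points arbitrarily close to $x$, so $x\in\overline{C_j}$.

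Now use the standard fact that if $A$ is connected and $A\subseteq B\subseteq\overline{A}$, then $B$ is connected. Apply it with $A=C_j$ and $B=C_j\cup\{x\}$: the set $C_j\cup\{x\}$ is connected and lies in $B^D_{<\delta}(x)$. It meets $C_1$ at $x$, so $C_1\cup C_j$ is connected. This contradicts the maximality of $C_1$ as a connected component, since $C_j\neq C_1$ and components are disjoint.

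Hence $C_1$ contains $B^D_{<\eta}(x)$ for some $\eta>0$. This ball is open in $D$, so $C_1$ is a connected neighbourhood of $x$ contained in $B^D_{<\delta}(x)$. Since $\delta$ was arbitrary, $D$ is locally connected.

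The only nontrivial ingredient is the finiteness of components, which is given. The rest is a pigeonhole step over the finitely many components, followed by the closure argument. I expect no serious obstacle. The only points needing care are that $B^D_{<\delta}(x)$ is indeed definable, and that the neighbourhoods involved are taken relative to $D$.
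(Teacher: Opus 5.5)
Your proof is correct. It takes a slightly different route from the paper, whose proof is only the hint ``use cell decomposition''.

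The natural way to carry out that hint is to partition $B^D_{<\delta}(x)$ into finitely many cells and let $V$ be the union of the cells whose closure contains $x$. Then $V$ is connected, since each such cell is connected and stays connected after adjoining $x$. It is a neighbourhood of $x$ in $D$, since the finitely many remaining cells have closures missing $x$.

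You run the same pigeonhole-plus-closure argument with the connected components of $B^D_{<\delta}(x)$ in place of cells. So you need only the finiteness-of-components Fact, itself a consequence of cell decomposition, rather than cell decomposition plus connectedness of cells.

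Your version also yields slightly more: the component $C_1$ of $x$ in $B^D_{<\delta}(x)$ is itself a neighbourhood of $x$. Run your argument at each $y\in C_1$ with a ball $B^D_{<\delta'}(y)\subseteq B^D_{<\delta}(x)$. The component of $y$ in that smaller ball is a connected subset of $B^D_{<\delta}(x)$ through $y$, hence lies in $C_1$. So $C_1$ is open, and it does not matter whether local connectedness is defined via connected or via open connected neighbourhoods.

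This sharper form is what Lemma~\ref{compointbdry} actually uses. There the paper asserts that $\delta$ can be chosen with $B^D_{<\delta}(x)$ itself connected. That is a genuinely stronger statement: it is true for definable sets by local conic structure (definable triviality of $y\mapsto\|y-x\|$ near $0$), but it is not a consequence of local connectedness. Replacing the ball by your $C_1$ lets that argument go through unchanged.
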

 
 \begin{proof}
Use cell decomposition.     
 \end{proof}
 \begin{lemma}\label{unbdconn}
    Given an unbounded definable cell $D$, there is $N\in\mathbb{N}$ such that for all $r\geq N$, $B_{\geq r}(0)\cap D$ is connected.
\end{lemma}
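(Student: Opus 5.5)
The plan is to study the definable family of spheres $S_t=\{x\in D:\|x\|=t\}$ for large $t$ and to pass from the slices to the tails $B_{\geq r}(0)\cap D$.

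\textbf{Step 1 (trivialize the norm at infinity).} Consider the definable continuous map $\rho:D\to\mathbb{R}$, $\rho(x)=\|x\|$. Since $D$ is unbounded, $\rho(D)$ contains an interval $(N_0,\infty)$. By definable trivialization (Hardt's theorem, a consequence of the cell decomposition theorem (II) applied to $\rho$), there are $N_1\ge N_0$ and a definable homeomorphism
\[
\rho^{-1}((N_1,\infty))\cong (N_1,\infty)\times S_{t_0}
\]
over $(N_1,\infty)$, for any fixed $t_0>N_1$. Consequently $\rho^{-1}([r,\infty))\cong [r,\infty)\times S_{t_0}$ for every $r>N_1$. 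The connected components of the tail are therefore exactly $[r,\infty)\times K$, where $K$ ranges over the finitely many components of $S_{t_0}$ (finite by the Fact on components). In particular the number of components of $B_{\geq r}(0)\cap D$ is independent of $r>N_1$, and the tail is connected if and only if one (equivalently every) large slice $S_t$ is connected.

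\textbf{Step 2 (connectedness of large slices).} It remains to show that $S_t$ is connected for $t$ large. I would argue by induction on the cell structure of $D=(i_1,\dots,i_n)$-cell, with $D$ either a graph $\Gamma(f)$ over a cell $D'\subseteq\mathbb{R}^{n-1}$ or a band $(f,g)_{D'}$.

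\emph{Graph case.} The projection $\pi:D\to D'$ is a homeomorphism. Since $\|x\|\ge\|\pi(x)\|$, $D'$ unbounded or $f$ unbounded controls how $\pi(S_t)$ sits in $D'$. I would use curve selection and monotonicity in the last coordinate to transport connectedness of tails of $D'$ (the induction hypothesis) to tails of $D$.

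\emph{Band case.} Here each fiber over $D'$ is an interval. I would show that a tail of $D$ is connected provided every fiber interval meets it in a nonempty set, or that the set of base points whose fiber meets the tail is a connected tail-like subset of $D'$. The fibers are connected, so the total space is connected.

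\textbf{Main obstacle.} The delicate point is Step 2 at the base and in the band case. There a tail may meet a fiber in two pieces, near both ends of an unbounded interval, or the relevant subset of $D'$ may be a union of two far-away pieces. Already for the $1$-cell $D=\mathbb{R}$ the slice $S_t=\{\pm t\}$ is disconnected. So the argument must exploit some additional restriction on how the cells are used, for instance that only one end of each unbounded direction is involved. This would rule out configurations such as $\mathbb{R}$ or $\mathbb{R}\times(0,1)$, where the tail splits into two ends.

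I would first check exactly which cells appear in the later application in Section~\ref{spf}. I would then run the induction under that restriction, keeping Step 1 as the general reduction from tails to slices. As literally stated the lemma fails for $D=\mathbb{R}$, so the plan can only go through once such a restriction is identified.
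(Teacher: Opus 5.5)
Your counterexample is correct. $\mathbb{R}$ is a $(1)$-cell in $\mathbb{R}^1$, and $B_{\geq r}(0)\cap\mathbb{R}=(-\infty,-r]\cup[r,\infty)$ is disconnected for every $r>0$, so no $N$ works and the lemma is false as stated. The paper's entire proof is ``induction on $\dim D$''. That induction cannot even start, because its base case is exactly your example. No dimension hypothesis rescues it either: $\mathbb{R}\times(0,1)$ is a full-dimensional $2$-cell in $\mathbb{R}^2$ whose tails have two components for all large $r$. So rejecting the statement is not a gap on your side; the defect is in the paper.

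Your plan can be sharpened on the repair. Your Step~2, the induction over the cell structure, is a dead end. The property that matters is that $D$ has one end, and cells are not what the paper actually uses.

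In Lemma~\ref{kd} and in the inductive step of Theorem~\ref{finstep}, the lemma is invoked for a component $C$ of $\bigcap_{l\in J}\{h_{l,\tau(l)}=0\}$ with $\dim C=k-|J|\geq 2$, which is not a cell. The $1$-dimensional case, where tails really are disconnected, is handled separately in Lemma~\ref{ubtocm1d}. Under assumption~(\ref{hass}), each $H_\tau$ is a proper local diffeomorphism $\mathbb{R}^k\to\mathbb{R}^k$. It is therefore a covering map onto a simply connected space, hence a diffeomorphism. So $C$ is a closed subset of $\mathbb{R}^k$ homeomorphic, via some $\phi$, to $\mathbb{R}^m$ with $m\geq 2$.

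For such $C$, your Step~1 finishes the argument:
\begin{enumerate}
\item For $r>N_1$ the trivialization shows that every component of $C\cap B_{\geq r}(0)$ is unbounded. Each component is therefore closed and non-compact, so its $\phi$-image is unbounded in $\mathbb{R}^m$.
\item Take a closed ball $\bar B\subseteq\mathbb{R}^m$ containing the compact set $\phi(C\cap B_{\leq r}(0))$. Then $\phi^{-1}(\mathbb{R}^m\setminus\bar B)$ is connected because $m\geq 2$, and it lies in the tail.
\item By the first point it meets every component of the tail, so the tail is connected.
\end{enumerate}

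A minor point: Hardt's definable trivialization theorem is a separate, deeper result. It is not a direct consequence of part (II) of the cell decomposition theorem, so cite it as such.
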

\begin{proof}
    Induction on  $\dim D$.
\end{proof}
\subsection{Regularity and Properness}
\indent

We list geometric facts we will use.
For the lemmas, proofs were given because we didn't find references that can be cited directly.

\begin{fact}\label{extsing}
   Let  $M$ be a manifold and let $f:M\to\mathbb{R}$ be a $C^1$ function.
   Suppose that $p\in M$ is a local maximum (or a local minimum) of $f$.
   Then $p$ is a critical point of $f$.

    In particular, if $M$ is a compact manifold, then $f$ has at least one critical point.
    \footnote{This fact is often used implicitly,
    e.g. in \cite[Part~I, Theorem~4.1]{milnor1963morse}, but the author didn't find any references that proved it explicitly.}
\end{fact}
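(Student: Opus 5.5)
The plan is to work in a chart and reduce to the one-variable derivative test. Let $m=\dim M$ and $p\in M$ a local maximum of $f$ (the minimum case follows by replacing $f$ with $-f$). Choose a $C^1$ chart $\varphi:U\to V\subseteq\mathbb{R}^m$ with $p\in U$, $U$ open in $M$, $V$ open, and $\varphi(p)=0$. Shrinking $U$, we may assume $f(q)\le f(p)$ for all $q\in U$. Put $F=f\circ\varphi^{-1}:V\to\mathbb{R}$, a $C^1$ function on an open subset of $\mathbb{R}^m$ with a maximum at $0$.

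The first step is to show $DF(0)=0$. For each $j\in[m]$, the function $t\mapsto F(te_j)$ is defined on a small interval $(-\delta,\delta)$, is differentiable at $0$, and has a maximum there. The one-variable argument applies: difference quotients $(F(te_j)-F(0))/t$ are $\le 0$ for $t>0$ and $\ge 0$ for $t<0$. Hence $\partial_jF(0)$ is both $\le 0$ and $\ge 0$, so it vanishes. Since $F$ is $C^1$, $DF(0)$ is determined by its partial derivatives, so $DF(0)=0$.

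The second step transfers this back to $M$. We have $df_p=DF(0)\circ d\varphi_p$, and $d\varphi_p:T_p(M)\to\mathbb{R}^m$ is an isomorphism. So $df_p=0$, meaning $p$ is a critical point of $f$ (i.e.\ $df_p$ is not surjective onto $\mathbb{R}$). If $M$ is given as a submanifold of some $\mathbb{R}^n$ with $T_p(M)$ defined extrinsically, the same conclusion holds: for any $v\in T_p(M)$, take a $C^1$ curve $\gamma$ in $M$ with $\gamma(0)=p$ and $\gamma'(0)=v$, and apply the one-variable argument to $f\circ\gamma$.

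For the ``in particular'' part, let $M$ be compact and nonempty. Since $f$ is continuous it attains a global maximum at some $p\in M$. A global maximum is a local maximum, so by the first part $p$ is a critical point.

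The only real obstacle is bookkeeping: making sure ``critical point'' is used in the sense of $df_p$ not being surjective, and that the chart or curve realizes every tangent vector. Both hold by the definition of a $C^1$ manifold, so no genuine difficulty arises.
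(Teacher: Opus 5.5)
Your proof is correct and takes the same approach as the paper: pass to a chart, use the Euclidean fact that a $C^1$ function has vanishing derivative at a local extremum, and transfer back through the chart. You also write out the one-variable difference-quotient argument, the chain-rule step $df_p=DF(0)\circ d\varphi_p$, and the compact case (a global maximum is attained), all of which the paper leaves implicit.
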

\begin{proof}
    Let $p\in U$ and  $\varphi:\mathbb{R}^n\to U$ such that $(U,\varphi)$ is a chart.
    Then $f\circ \varphi $ is a $C^1$ function 
    on $\mathbb{R}^n$ which has a local maximum
    (or a local minimum)  at $\varphi^{-1}(p)$.
    By multi-variable Calculus,
     $\varphi^{-1}(p)$ is a critical point of $ f\circ \varphi$.
     So $p$ is a critical point of $f$.
\end{proof}

An immediate example of how to use Fact \ref{extsing} is as follows.
   \begin{lemma}\label{unbcompl}
 Let   $M$ be a manifold  which as a set
 is closed in $\mathbb{R}^n$.
         Let $f:M\to \mathbb{R}$ be a $C^1$ regular function on $M$.
         Then for all $t\in\mathbb{R}$ such that $\{f(z)<t\}\neq \emptyset$,
         $\{f(z)\leq t\}$ is unbounded.
     \end{lemma}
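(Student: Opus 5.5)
We argue by contradiction using Fact \ref{extsing}. Fix $t\in\mathbb{R}$ with $\{f<t\}\neq\emptyset$, and suppose that $K:=\{f\le t\}$ is bounded. Because $M$ is closed in $\mathbb{R}^n$ and $f$ is continuous on $M$, the set $K$ is closed in $M$ and hence closed in $\mathbb{R}^n$. Being also bounded, $K$ is compact and nonempty, so $f|_K$ attains a minimum at some point $p\in K$.

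Since $\{f<t\}\neq\emptyset$, we have $f(p)<t$. The set $U=\{f<t\}$ is open in $M$ by continuity and is contained in $K$. For every $z\in U$ we have $f(z)\ge f(p)$, so $p$ is a minimum of $f$ on the open neighbourhood $U$ of $p$ in $M$. Thus $p$ is a local minimum of $f$ on $M$, and Fact \ref{extsing} makes $p$ a critical point of $f$. This contradicts the assumption that $f$ is regular on $M$. Hence $\{f\le t\}$ is unbounded.

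The step needing care is the choice of the open neighbourhood. One might try to take the minimum of $f$ over all of $M$, but $f$ need not be bounded below on the complement of $K$, so such a minimum need not exist. What actually works is the strict inequality $f(p)<t$, which holds precisely because $\{f<t\}$ is nonempty. This places $p$ in the open set $\{f<t\}\subseteq K$, so the minimum over $K$ is a genuine local minimum on $M$ and not merely a boundary minimum. The remaining checks, that $K$ is closed in $\mathbb{R}^n$ (which uses that $M$ is closed) and that compactness gives a minimum, are routine.
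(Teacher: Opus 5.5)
Your proof is correct and is essentially the paper's argument: $\{f\le t\}$ is compact because $M$ is closed, its minimizer lies in the nonempty open set $\{f<t\}$, and Fact \ref{extsing} then contradicts regularity. One small inaccuracy in your closing commentary: off $K$ one has $f>t$, so $f$ is bounded below there and the minimum over $K$ is in fact a global minimum on $M$ (as the paper notes parenthetically); this does not affect your proof.
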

     \begin{proof}
     Fix $t\in\mathbb{R}$ such that $\{f(z)<t\}\neq \emptyset$.
Suppose for contradiction that    $\{f(z)\leq t\}$ is bounded.
Then by closedness of $M$ in $\mathbb{R}^n$,  $\{f(z)\leq t\}$ is compact.
Hence $f|_{ \{f(z)\leq t\}}$ reaches minimum.
Since $t\in\mathbb{R}$ satisfies that $\{f(z)<t\}\neq \emptyset$, there is $x\in \{f(z)<t\}$ such that $f(x)=\min\{f(z):z\in  M\text{ and }f(z)\leq t\}$.
(The value of $f$ is larger on $\{z\in M:f(z)=t\}$.)
But since $\{f(z)<t\}$ is open  in $M$, this $x$ would be a critical point of $f$ on $M$, contradicting the regularity assumption of $f$.
     \end{proof}

\begin{lemma}\label{exiextrem}
    Let $C\subseteq \mathbb{R}^n$ be an   unbounded set which is closed in $\mathbb{R}^n$.
    Let $f:C\to\mathbb{R}$ be a function continuous on $C$.
    
    Suppose that Im$(f)\subseteq(a,+\infty)$ (resp. Im$(f)\subseteq(-\infty,a)$) for some $a\in \mathbb{R}$,
    and $\underset{\|x\|\to\infty}{\lim}
    f(x)=+\infty$ 
    (resp.
    $\underset{\|x\|\to\infty}{\lim}
    f(x)=-\infty$).
    Then there is $z\in C$ such that $f(z)=\min$ Im$(f)$
    (resp. $z\in C$ such that $f(z)=\max$ Im$(f)$).
\end{lemma}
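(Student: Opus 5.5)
The plan is to reduce to the compactness of a suitable sublevel set and then apply the extreme value theorem there. I treat the case $\mathrm{Im}(f)\subseteq(a,+\infty)$ with $\lim_{\|x\|\to\infty}f(x)=+\infty$; the other case follows by applying this one to $-f$, which swaps minima and maxima.

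First I fix a point $x_0\in C$, which exists because $C$ is unbounded and hence nonempty, and set $r=f(x_0)$. By the definition of $\lim_{\|x\|\to\infty}f(x)=+\infty$ given in the Notation, there is $N\in\mathbb{R}$ such that every $x\in C$ with $\|x\|\geq N$ satisfies $f(x)>r$. Now consider $K=\{x\in C:\|x\|\leq N\}$, which is the intersection of the closed set $C$ with a closed ball. It is therefore closed and bounded in $\mathbb{R}^n$, hence compact. It is nonempty because $x_0\in K$: indeed $f(x_0)=r$ is not $>r$, so $\|x_0\|<N$.

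Since $f$ is continuous on $C$, the restriction $f|_K$ attains a minimum at some $z\in K$, and $f(z)\leq f(x_0)=r$. For any $x\in C$, either $x\in K$, in which case $f(x)\geq f(z)$, or $\|x\|>N$, in which case $f(x)>r\geq f(z)$. So $f(z)=\min \mathrm{Im}(f)$.

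The argument has no real obstacle. The only point needing care is that the compact piece $K$ must be nonempty and must contain a value no larger than the values of $f$ outside it; the choice $r=f(x_0)$ handles both at once. The hypothesis $\mathrm{Im}(f)\subseteq(a,+\infty)$ is not actually needed beyond guaranteeing that the infimum is finite, which the argument already shows directly.
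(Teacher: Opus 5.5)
Your proof is correct and is essentially the paper's argument: fix $x_0\in C$, choose $N$ so that $f(x)>f(x_0)$ whenever $\|x\|\geq N$, and minimize $f$ over the compact set $C\cap B_{\leq N}(0)$. Your observation that $\|x_0\|<N$ holds automatically just makes the paper's enlargement to $N'=\max\{N,\|x_0\|\}$ unnecessary.
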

\begin{proof}
Let $x_0\in C$.
Let $N\in\mathbb{R}$ be large such that for all $x\in C$  with $\|x\|\geq N$, $f(x)>f(x_0)$.
Let $N'=\max\{N,\|x_0\|\}$.
Then $C\cap B_{\leq N'}(0)$ is  compact, so $f|_{C\cap B_{\leq N'}(0)}$ reaches minimum on $C\cap B_{\leq N'}(0)$.
But this is also a global minimum by the choice of $N$.

\end{proof}

\begin{remark}
    If Im$(f)$ is not bounded below (resp. bounded above),
    then $\inf$ Im$(f)$
    (resp. $\sup$ Im$(f)$) does not exist in $\mathbb{R}$.
\end{remark}

We  show that given a regular map, the restriction on a level set inherits regularity.
\begin{fact}
\label{kertan}\cite[Corollary~5.39]{lee2003smooth}

  Let $M$ be a $k$-dimensional manifold.
    Suppose that $F=(f_1,...,f_k):M\to\mathbb{R}^k$ is a regular map. 
    Then  for all $(a_1,...,a_k)\in\mathbb{R}^k$ and all $J\subseteq [k]$, $\underset{i\in  J}{\bigcap}\{f_i=a_i\}$ is a manifold and
\[
T_p( \bigcap_{i\in J}\{f_i=a_i\})=ker(d_p(\pi_{ J}\circ F)),
\]    
which has dimension $k-|J|$.
\end{fact}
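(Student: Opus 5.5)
The statement to be proved is Fact~\ref{kertan}. The plan is to reduce it to the regular value theorem applied to the partial map $\pi_J\circ F$. Fix $J\subseteq[k]$ and $a=(a_1,\dots,a_k)\in\mathbb{R}^k$, and write $F_J=\pi_J\circ F=(f_i)_{i\in J}:M\to\mathbb{R}^{|J|}$. Then
\[
\bigcap_{i\in J}\{f_i=a_i\}=F_J^{-1}(\pi_J(a)),
\]
so it suffices to show that $F_J$ is a submersion at every point, i.e.\ that $d_pF_J$ is surjective for all $p\in M$. Once that is known, the regular level set theorem (e.g.\ \cite[Corollary~5.14]{lee2003smooth} and the subsequent identification of tangent spaces of level sets) gives two things:
\begin{itemize}
\item $F_J^{-1}(\pi_J(a))$ is an embedded submanifold of dimension $k-|J|$;
\item its tangent space at $p$ is $\ker d_pF_J=\ker d_p(\pi_J\circ F)$.
\end{itemize}
If the preimage is empty, the statement holds vacuously.

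Surjectivity is immediate from regularity of $F$. Since $\dim M=k$, regularity of $F:M\to\mathbb{R}^k$ means $d_pF:T_pM\to\mathbb{R}^k$ is surjective, hence an isomorphism. The map $\pi_J:\mathbb{R}^k\to\mathbb{R}^{|J|}$ is linear and surjective, so $d_pF_J=\pi_J\circ d_pF$ is surjective by the chain rule. The dimension count $k-|J|$ also follows directly from rank--nullity: $\dim\ker(\pi_J\circ d_pF)=k-|J|$.

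The only genuine step is the tangent space identification, and I would check it directly in the $C^1$ setting rather than rely on a smooth-category citation. The inclusion $T_p\subseteq\ker$ holds because $F_J$ is constant on the level set: differentiating along any $C^1$ curve in the level set through $p$ gives zero. Equality then follows by comparing dimensions, since both spaces have dimension $k-|J|$. The manifold structure itself comes from the $C^1$ implicit function theorem applied in a chart around $p$, which is the main point requiring care, since the functions here are only $C^1$.
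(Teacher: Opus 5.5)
Your proposal is correct and is essentially the argument behind the paper's bare citation of Lee's Corollary~5.39: you recognize $\bigcap_{i\in J}\{f_i=a_i\}$ as a level set of $\pi_J\circ F$, which is a submersion because $d_pF$ is an isomorphism and $\pi_J$ is a linear surjection, and then apply the regular level set theorem together with the kernel description of its tangent spaces. Your remark that this smooth-category result has to be rerun with the $C^1$ implicit function theorem is a fair point that the paper glosses over, since its functions are only $C^1$.
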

\begin{lemma}\label{reglev}
    Let $M$ be a $k$-dimensional manifold.
    Suppose that $F=(f_1,...,f_k):M\to\mathbb{R}^k$ is a regular map. 
    Then for all $(a_1,...,a_{k})\in\mathbb{R}^k$ and all $I,J\subseteq [k]$ with $I\cap J=\emptyset$,
    $\pi_J\circ F$ is regular on $\underset{i\in I}{\bigcap}
    \{f_i=a_i\}$.
 
\end{lemma}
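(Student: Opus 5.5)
We argue pointwise, using Fact \ref{kertan} together with the definition of regularity. Set $N=\bigcap_{i\in I}\{f_i=a_i\}$ and take $p\in N$. If $I=\emptyset$ then $N=M$. In that case $d_pF:T_pM\to\mathbb{R}^k$ is surjective, and since $\pi_J:\mathbb{R}^k\to\mathbb{R}^{|J|}$ is a surjective linear map, $d_p(\pi_J\circ F)=\pi_J\circ d_pF$ is surjective as well. So we may assume $I\neq\emptyset$.

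By Fact \ref{kertan}, $N$ is a manifold and
\[
T_pN=\ker\bigl(d_p(\pi_I\circ F)\bigr)=\ker\bigl(\pi_I\circ d_pF\bigr),
\]
a subspace of dimension $k-|I|$. Our goal is to show that the restriction $d_p(\pi_J\circ F|_N)=\pi_J\circ d_pF|_{T_pN}$ maps $T_pN$ onto $\mathbb{R}^{J}$. Since $M$ is $k$-dimensional and $F$ is regular, $d_pF:T_pM\to\mathbb{R}^k$ is a linear isomorphism. It therefore maps $T_pN$ isomorphically onto
\[
d_pF(T_pN)=\{v\in\mathbb{R}^k: v_i=0 \text{ for all } i\in I\}=\mathbb{R}^{[k]\setminus I}.
\]
Because $I\cap J=\emptyset$, we have $J\subseteq[k]\setminus I$, so $\pi_J$ restricted to $\mathbb{R}^{[k]\setminus I}$ is surjective onto $\mathbb{R}^J$. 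Composing gives that $\pi_J\circ d_pF|_{T_pN}$ is surjective. Hence $p$ is a regular point of $\pi_J\circ F|_N$, and since $p\in N$ was arbitrary, $\pi_J\circ F$ is regular on $N$.

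The main point requiring care is the identification of $T_pN$ inside $T_pM$ and of the differential of the restriction. The differential of $F|_N$ at $p$ is the restriction of $d_pF$ to $T_pN$; this holds because $N$ is an embedded submanifold, which follows from the regular level set theorem underlying Fact \ref{kertan}. Once that is granted, everything else is linear algebra. If $J=\emptyset$ the statement is vacuous. If $I=[k]$ then $J=\emptyset$ is forced and $N$ is $0$-dimensional, so that case is vacuous too.
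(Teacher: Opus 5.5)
Your proof is correct and follows essentially the same route as the paper. Both use Fact \ref{kertan} to identify $T_pN$ with $\ker(\pi_I\circ d_pF)$, use that $d_pF$ is an isomorphism to carry this kernel onto the coordinates indexed by $[k]\setminus I$, and conclude surjectivity of $\pi_J$ there from $J\subseteq[k]\setminus I$; your separate treatment of $I=\emptyset$ and your explicit note that the differential of the restriction is the restricted differential are harmless additions the paper leaves implicit.
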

\begin{proof}

Given $p\in \underset{i\in I}{\bigcap}
\{f_i=a_i\}$, we want to show that 
$d_p(\pi_J\circ F)$ is a surjective map on $T_p( \underset{i\in I}{\bigcap}
\{f_i=a_i\})$.
By Fact \ref{kertan},
\[
T_p( \bigcap_{i\in I}\{f_i=a_i\})=ker(d_p(\pi_{I}\circ F)).
\]
Since  $d_pF:T_p(M)\to \mathbb{R}^k$ is an isomorphism and
$d_pF=(\pi_I\circ d_pF, \pi_{[k]\setminus I}\circ d_pF)$,
$\pi_{[k]\setminus I}\circ d_pF$ is an isomorphism on $ker(d_p(\pi_{I}\circ F))$.
Since $J\subseteq [k]\setminus I$, $\pi_J\circ F=\pi_J\circ\pi_{[k]\setminus I}\circ F$, and hence
$d_p(\pi_J\circ F)=\pi_J \circ \pi_{[k]\setminus I} \circ d_pF$ is surjective on  $ker(d_p(\pi_{I}\circ F))$.

\end{proof}

\section{Proof}\label{spf}

\subsection{The Boundary}
\indent

Roughly speaking,
the following lemma shows that if the  definable functions that give the boundaries are continuous,
and the set of  boundaries is non-empty, 
then for every connected component of the set partially enclosed by all but one level set,
it contains at least some boundary point.
\begin{lemma}\label{compointbdry}
     Let $D\subseteq \mathbb{R}^k$ be a connected $\mathcal{R}$-definable set  and let $f_1,...,f_k$, $g_1,...,g_k:
     D\to\mathbb{R}$ be continuous definable functions.
 
  Given $j\in[k]$, suppose that
    \[
    \underset{\{i\in[k],i\neq j\}}{\bigcup}
    \{f_i=0\}\cup\{g_i=0\}\cup
     \{f_j=0\}\neq\emptyset
     \]
  (resp.
     \(
     \underset{\{i\in[k],i\neq j\}}{\bigcup}
     \{f_i=0\}\cup\{g_i=0\}\cup
     \{g_j=0\}\neq\emptyset\)).
    
Then for all choice of  a sequence $(\square_{i,\epsilon}:i\in[k],\epsilon \in \{0,1\})\in\{\leq ,\geq \}^{[k]\times\{0,1\}}$ and for every non-empty connected component $C$ of 
\[X:=\bigcap_{\{i\in[k],i\neq j\}}
\{ f_i\square_{i,0}
   0\} 
   \cap  
   \{ g_i\square_{i,1}
   0\}
   \cap 
 \{ f_j\square_{j,0}
   0\} 
   \]
(resp. $\underset{\{i\in[k],i\neq j\}}{\bigcap}
\{ f_i\square_{i,0}
   0\} 
   \cap  
   \{ g_i\square_{i,1}
   0\}
   \cap 
 \{ g_j\square_{j,1}
   0\} $),
there is \[x\in   \underset{\{i\in[k],i\neq j\}}{\bigcup}
    \{f_i=0\}\cup\{g_i=0\}\cup
     \{f_j=0\}\]
 (resp. $x\in$
     \(
      \underset{\{i\in[k],i\neq j\}}{\bigcup}
    \{f_i=0\}\cup\{g_i=0\}\cup
     \{g_j=0\}
    \))
that is also in $C$.
\end{lemma}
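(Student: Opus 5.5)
The plan is a clopen argument inside the connected set $D$. Write $Z$ for the union $\bigcup_{i\neq j}\{f_i=0\}\cup\{g_i=0\}\cup\{f_j=0\}$; the case with $\{g_j=0\}$ is identical. Fix the signs $\square_{i,\epsilon}$ and a nonempty connected component $C$ of $X$. Suppose, for contradiction, that $C\cap Z=\emptyset$. I will show that $C$ is both open and closed in $D$. Connectedness of $D$ then forces $C=D$. Since $Z\subseteq D$ is nonempty by hypothesis, this gives $C\cap Z=Z\neq\emptyset$, a contradiction.

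\emph{Closedness.} Each set $\{f_i\square_{i,0}0\}$, $\{g_i\square_{i,1}0\}$ with $\square\in\{\leq,\geq\}$ is closed in $D$, because the functions are continuous on $D$. Hence $X$, a finite intersection of such sets, is closed in $D$. A connected component of a topological space is always closed in that space, so $C$ is closed in $X$. Therefore $C$ is closed in $D$.

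\emph{Openness.} This is the step where the hypothesis $C\cap Z=\emptyset$ and definability are used. Take $x\in C$. Since $x\in X\setminus Z$, none of the functions occurring in the definition of $X$ vanishes at $x$. Hence $x$ satisfies the strict inequalities: $f_i(x)\,\square_{i,0}'\,0$ and $g_i(x)\,\square_{i,1}'\,0$ for $i\neq j$, and $f_j(x)\,\square_{j,0}'\,0$. By continuity there is an open neighbourhood $U$ of $x$ in $D$ on which all these strict inequalities hold, so $U\subseteq X$.

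Since $D$ is definable, Lemma \ref{locconn} says $D$ is locally connected. So we may shrink $U$ to a connected open neighbourhood $V\subseteq U$ of $x$ in $D$. Then $V$ is a connected subset of $X$ that meets the component $C$ (at $x$), so $V\subseteq C$. Hence $C$ is open in $D$.

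The only delicate point is this local connectedness step. It is exactly why definability of $D$ is needed, and Lemma \ref{locconn} supplies it. With openness and closedness in hand, the clopen argument above finishes the proof, and the ``resp.'' version follows by replacing $f_j$ and $\square_{j,0}$ with $g_j$ and $\square_{j,1}$ throughout.
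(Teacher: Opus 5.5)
Your proposal is correct and follows the paper's proof essentially step for step: you assume $C\cap Z=\emptyset$, show $C$ is closed in $D$ (as a component of the closed set $X$) and open in $D$ (via the strict inequalities and local connectedness from Lemma \ref{locconn}), and conclude $C=D\supseteq Z$, a contradiction. Your choice of a connected open neighbourhood $V\subseteq U$ is, if anything, a slightly more faithful use of local connectedness than the paper's claim that small balls $B^D_{<\delta}(x)$ are themselves connected.
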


\begin{proof}
Given $j\in[k]$,
WLOG, we may assume that \[ \underset{\{i\in[k],i\neq j\}}{\bigcup}\{f_i=0\}\cup\{g_i=0\}\cup
     \{f_j=0\}\neq\emptyset.\]
    Let   $(\square_{i,\epsilon}:i\in[k],\epsilon \in \{0,1\})
    \in\{\leq ,\geq \}^{[k]\times\{0,1\}}$,
    and  $C$ be a non-empty connected component of $X$.
    Suppose for contradiction that the lemma fails.
    We prove that $C$ is 
    non-empty,  closed and
    open relative to $D$.
    
    \underline{$C$ is closed in $D$}:
 Since $X$ is closed in $D$, and components of $X$ are closed in $X$,
 $C$ is closed in $D$.
 
    \underline{$C\neq\emptyset$}: 
    This is by assumption.
    
     \underline{$C$ is open in $D$}:
    Suppose that
    \[
    C\cap\left[
    \bigcup_{\{i\in[k],i\neq j\}}
   \{f_i=0\}\cup\{g_i=0\}\cup
     \{f_j=0\}
    \right]=\emptyset.
    \]
    Then since 
    $C\subseteq X$,
    \[
    C\subseteq\bigcap_{\{i\in[k],i\neq j\}}
    \{ f_i\square_{i,0}'0\}\cap
    \{g_i\square_{i,1}'0\}\cap
    \{f_j\square_{i,0}'0\}.
    \]
    
    Let $x\in C$.
    Since $\underset{\{i\in[k],i\neq j\}}{\bigcap}
    \{ f_i\square_{i,0}'0\}\cap
    \{g_i\square_{i,1}'0\}\cap
    \{f_j\square_{i,0}'0\}$
    is open in $D$, 
    there is $\delta>0$ such that $B_{<\delta}^D(x)\subseteq 
    \underset{\{i\in[k],i\neq j\}}{\bigcap}
     \{ f_i\square_{i,0}'0\}\cap
    \{g_i\square_{i,1}'0\}\cap
    \{f_j\square_{i,0}'0\}
    \subseteq X$.
   Moreover, by Lemma \ref{locconn}, we can choose $\delta>0$ such that $B_{<\delta}^D(x)$ is connected. 

    Since $C$, $B_{<\delta}^D(x)$ are connected sets,
    where 
    $x\in C\cap B_{<\delta}^D(x)\neq\emptyset$,
    $C\cup B_{<\delta}^D(x)$ is a connected subset of $X$ containing $C$.
    Since $C$ is a connected component of $X$,
    $C\cup B_{<\delta}^D(x)\subseteq C$.
    So for each $x\in C$, we found some $\delta>0$ such that 
    $B_{<\delta}^D(x)\subseteq C$. 
    It follows that $C$ is open in $D$.
    
    Since $C\neq \emptyset$ and is clopen in $D$,
    $C=D$, 
    which contradicts the assumption that 
    $\underset{\{i\in[k],i\neq j\}}{\bigcup}
    \{f_i=0\}\cup\{g_i=0\}\cup\{f_j=0\}\neq\emptyset$.
    Hence 
     \[
    C\cap\left[
    \underset{\{i\in[k],i\neq j\}}{\bigcup}
      \{f_i=0\}\cup\{g_i=0\}\cup\{f_j=0\}
      \right]
    \neq \emptyset.
    \]
\end{proof}
\begin{remark}
  $X$ is not necessarily connected, although $D$ is connected.
   On the other hand, the nice thing is that we only need connectedness and definability of $D$.
   \end{remark}
\begin{lemma}\label{kd}
    Let $k\geq 2$  and $D\subseteq\mathbb{R}^k$ be a connected unbounded  $\mathcal{R}$-definable set.

    Let   $f_1,...,f_k$, $g_1,...,g_k
    :D\to\mathbb{R}$ be continuous definable functions.
     
   Given  a sequence $(\square_{i,\epsilon}:i\in[k],\epsilon \in \{0,1\})
   \in\{\leq ,\geq \}^{[k]\times\{0,1\}}$,
   suppose that 
    \begin{enumerate}
          \item for all  $j\in[k]$,
          {$\underset{\{i\in[k],i\neq j\}}{\bigcap}$}
          $\{ f_i\square_{i,0}
   0\} 
   \cap  
   \{ g_i\square_{i,1}
   0\}
   \cap\{f_j=0\}$ is compact;
          \item for all  $j\in[k]$,
              {$\underset{\{i\in[k],i\neq j\}}{\bigcap}$}
              $\{ f_i\square_{i,0}
   0\} 
   \cap  
   \{ g_i\square_{i,1}
   0\}
   \cap\{g_j=0\}$ is compact;
          \item  for all $\delta>0$,
          \[
        B^D_{>\delta}(0)\cap  \left(
        \bigcup_{i\in [k]}\{f_i=0\}
          \cup\{g_i=0\}
          \right)\neq\emptyset;
          \]
          \item   $r>0$  satisfies that 
          $B_{\geq r}^D(0)$ is connected, and $ B_{\geq r}^D(0)$ is disjoint from
          \begin{align*}\label{bddry}
        B:=  \bigcup_{j\in[k]}
        \left[
        \bigcap_{\{i\in[k],i\neq j\}}
          \{ f_i\square_{i,0}
   0\} 
   \cap  
   \{ g_i\square_{i,1}
   0\}
          \cap\{f_j=0\}
         \cup 
          \bigcap_{\{i\in[k],i\neq j\}}
          \{ f_i\square_{i,0}
   0\} 
   \cap  
   \{ g_i\square_{i,1}
   0\}
   \cap\{g_j=0\}\right].
          \end{align*}
    \end{enumerate}
    
    Then
    \[A:=B^{D}_{\geq r}(0)\cap\bigcap_{i\in[k]}
    \{ f_i\square_{i,0}
   0\} 
   \cap  
   \{ g_i\square_{i,1}
   0\}
   =\emptyset.
    \]
   
\end{lemma}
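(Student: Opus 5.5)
We argue by contradiction: assume $A\neq\emptyset$ and derive a point of $B$ lying in $B^D_{\geq r}(0)$, which contradicts hypothesis 4. The tool is the clopen argument from the proof of Lemma \ref{compointbdry}. We run it on the ambient connected definable set $D':=B^D_{\geq r}(0)$ instead of $D$, and on the full intersection $A$ (all $2k$ conditions) instead of the set $X$ of that lemma. We cannot quote Lemma \ref{compointbdry} verbatim, because its $X$ omits the condition on $g_j$, and we need all $2k$ conditions to land in $B$. Hypotheses 1 and 2 do not seem to be needed for this particular step.

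\textbf{Step 1 (setting up $D'$).} By hypothesis 4, $D'$ is connected; it is definable, so it is locally connected by Lemma \ref{locconn}. By hypothesis 3 with $\delta=r$, there is a point of $\bigcup_{i\in[k]}\{f_i=0\}\cup\{g_i=0\}$ in $B^D_{>r}(0)\subseteq D'$. Hence the zero sets meet $D'$, so not every point of $D'$ satisfies all $2k$ inequalities strictly.

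\textbf{Step 2 (clopen argument).} Let $C$ be a non-empty connected component of $A$. Since $A$ is defined inside $D'$ by non-strict inequalities of continuous functions, $A$ is closed in $D'$, and so $C$ is closed in $D'$.

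Suppose $C$ avoids every zero set $\{f_i=0\}$, $\{g_i=0\}$. Then each $x\in C$ satisfies all $2k$ conditions strictly, i.e.\ $x\in\bigcap_{i}\{f_i\square_{i,0}'0\}\cap\{g_i\square_{i,1}'0\}$, a set open in $D'$. Choose $\delta>0$ so that $B^{D'}_{<\delta}(x)$ is connected (Lemma \ref{locconn}) and contained in this open set, hence in $A$. Then $C\cup B^{D'}_{<\delta}(x)$ is a connected subset of $A$, so maximality of $C$ forces $B^{D'}_{<\delta}(x)\subseteq C$. Thus $C$ is open in $D'$.

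Being non-empty and clopen in the connected set $D'$, $C$ equals $D'$. This contradicts Step 1, since $C\subseteq A$ satisfies all inequalities strictly while $D'$ contains a zero of some $f_i$ or $g_i$.

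\textbf{Step 3 (landing in $B$).} By Step 2, there are $x\in C\subseteq A\subseteq D'$ and $l\in[k]$ with $f_l(x)=0$ or $g_l(x)=0$. Say $f_l(x)=0$; the case $g_l(x)=0$ is symmetric. Since $x\in A$, $x$ satisfies $f_i\square_{i,0}0$ and $g_i\square_{i,1}0$ for every $i\neq l$. Therefore
\[
x\in\bigcap_{\{i\in[k],i\neq l\}}\{ f_i\square_{i,0}0\}\cap\{ g_i\square_{i,1}0\}\cap\{f_l=0\}\subseteq B .
\]
But $x\in B^D_{\geq r}(0)$, which hypothesis 4 says is disjoint from $B$. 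This contradiction gives $A=\emptyset$.

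The main point needing care is Step 2: we must work relative to $D'$ rather than $D$, so that the clopen argument uses connectedness and local connectedness of $D'$. This is why $r$ was chosen as in hypothesis 4, and why hypothesis 3 is needed to place a zero point inside $D'$ rather than merely inside $D$.
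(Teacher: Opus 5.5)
Your proof is correct. It streamlines the paper's argument rather than reproducing it.

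Your Step 3 is exactly the paper's Claim~\ref{bdisjA}: hypothesis 4 forces $A$ to miss every zero set $\{f_i=0\}$, $\{g_i=0\}$. The difference is in how a zero point is produced inside a component meeting $A$.

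The paper does quote Lemma~\ref{compointbdry} as stated, with ambient set $B^D_{\geq r}(0)$. It first splits hypothesis 3 to fix a $j$ for which $\bigcup_{i\neq j}\{f_i=0\}\cup\{g_i=0\}\cup\{f_j=0\}$ meets $B^D_{\geq r}(0)$. It then takes the component $C$ containing some $a\in A$ of the larger set $X$, which drops the $g_j$ condition, and obtains a zero point $z\in C$. Next it notes that $z$ must satisfy $g_j\,\inv(\square_{j,1})'\,0$, since otherwise $z\in A$. Finally it applies the intermediate value theorem to $g_j$ on $C$, between $a$ and $z$, to get $w\in C\cap\{g_j=0\}\subseteq A$, contradicting the claim.

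So your remark that the lemma ``cannot be quoted verbatim'' is not quite accurate: it can be, at the price of this extra IVT step. By running the clopen argument directly on $A$ with all $2k$ conditions, relative to $B^D_{\geq r}(0)$, you avoid both the choice of $j$ and the IVT step. Your proof is therefore shorter and self-contained; the paper's route gains only the reuse of Lemma~\ref{compointbdry} in its existing form.

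Your observation that hypotheses 1 and 2 play no role in deriving $A=\emptyset$ matches the paper. There they are used only to show that some $r$ satisfying 4 exists.
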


\begin{proof}
We first show that if 1., 2. hold,
then there is at least one $r>0$ that satisfies 4, so the lemma is not trivial:

By Lemma \ref{unbdconn}, there is $\delta$ such that for all $r>\delta$,
 $B_{\geq r}^D(0)$ is connected.
By 1., 2.,
the set $B$ is compact,  so when $r$ is large, $B_{\geq r}^D(0)\cap B=\emptyset$.

 Now assume 1., 2., 3., and let $r>0$ satisfy 4..  
 We first show that $A$ does not intersect the ``boundary":
\begin{claim}\label{bdisjA}
$\left(\underset{i\in[k]}{\bigcup}\{f_i=0\}\cup\{g_i=0\}\right)\cap A=\emptyset$
\end{claim}
\begin{proof}
Suppose not. Then there is $x \in A$ such that $x$ is in, say $\{f_j=0\}$ for some $j\in[k]$.
So $x\in \underset{\{i\in[k],i\neq j\}}{\bigcap}
\{ f_i\square_{i,0}
   0\} 
   \cap  
   \{ g_i\square_{i,1}
   0\}
   \cap \{f_j=0\}$ and $\|x\|\geq r$, 
contradicting 4..

\end{proof}

  Since for each $j\in[k]$,
  \begin{align*}
   \underset{i\in [k]}{\bigcup}\{f_i=0\}
          \cup\{g_i=0\}
          &=
          \underset{\{i\in[k],i\neq j\}}{\bigcup}
  \{f_i=0\}\cup\{g_i=0\}\cup\{f_j=0\}\\
  &\cup \underset{\{i\in[k],i\neq j\}}{\bigcup}
  \{f_i=0\}\cup\{g_i=0\}\cup\{g_j=0\},
  \end{align*}
  WLOG, we may assume in 3. that for some $j\in[k]$,
  \[ B^D_{\geq r}(0)\cap \left(
  \underset{\{i\in[k],i\neq j\}}{\bigcup}
  \{f_i=0\}\cup\{g_i=0\}\cup\{f_j=0\}
 \right)
          \neq\emptyset.
          \]
          By Lemma \ref{compointbdry}, connectedness of $B^D_{\geq r}(0)$ 
          and 3., for every non-empty connected component of $X:=
          B^{D}_{\geq r}(0)\cap
  \underset{\{i\in[k],i\neq j\}}{\bigcap}
  \{ f_i\square_{i,0}
   0\} 
   \cap  
   \{ g_i\square_{i,1}
   0\}
   \cap \{f_j\square_{j,0}0\}$, there is 
     $x\in \underset{\{i\in[k],i\neq j\}}{\bigcup}
   \{f_i=0\}\cup\{g_i=0\}\cup\{f_j=0\}$ in the component.
   
    Suppose for contradiction that   $A\neq \emptyset$.
    Let $a\in A\subseteq X$.
    Let $C$ be the connected component of $X$ which contains $a$.
  By Lemma \ref{compointbdry}, connectedness of $B^D_{\geq r}(0)$ and 3., 
  \[C\cap\left(\bigcup_{\{i\in[k],i\neq j\}}
  \{f_i=0\}\cup\{g_i=0\}\cup\{f_j=0\}
  \right)\neq\emptyset.\]

    Let $z\in C\cap\left(
    \underset{\{i\in[k],i\neq j\}}{\bigcup}
     \{f_i=0\}\cup\{g_i=0\}\cup\{f_j=0\}
     \right)$.
   Then $z\in \{g_j\inv(\square_{j,1})'0\}$.
   Otherwise, \[z\in A\cap \left(\bigcup_{i\in[k]}\{f_i=0\}
\cup\{g_i=0\}\right),
\]
contradicting Claim \ref{bdisjA}.
Since $a\in A$,  $a\in \{ g_j\square_{j,1}0\}$.
Because $C$ is connected, and $a,z\in C$,
by Intermediate Value Theorem applied to the continuous function $g_j$,
there is $w\in C\cap \{g_j=0\}$.
Since \[ w\in C\cap \{g_j=0\}
\subseteq A\cap
\left(\bigcup_{i\in[k]}
\{f_i=0\}\cup\{g_i=0\}
\right),\]
this contradicts Claim \ref{bdisjA}.
So we must have $A=\emptyset$.
\end{proof}
\begin{remark}
\mbox{}
\begin{enumerate}

    \item 
    In the proof, we used the full power of hypotheses 1. and 2., since in the intersection $C\cap\left(\underset{\{i\in[k],i\neq j\}}{\bigcup}
   \{f_i=0\}\cup\{g_i=0\}\cup\{f_j=0\}
   \right)$, we didn't know from which  $\{f_i=0\}$ 
or $\{g_i=0\}$ we would get the point $z$.
\item To show that the ``enclosed" region $\underset{i\in[k]}{\bigcap}
    \{ f_i\square_{i,0}
   0\} 
   \cap  
   \{ g_i\square_{i,1}
   0\}$ is compact,
   by the lemma, we just need to check the hypotheses 1., 2., 3..
\end{enumerate}
\end{remark}

\subsection{The Enclosed Region}
\indent

In this section, we will prove the main theorem and other properties that may be of independent interest.

  Let $f_1,...,f_k,g_1,...,g_k$ be $C^1$ real-valued functions on $\mathbb{R}^k$  definable in $\mathcal{R}$.
    Let $h_{i,0}$ denote $f_i$ and  $h_{i,1}$ denote $g_i$.
We will work under the assumption 
that
\begin{align}\label{hass}
   &\text{ for all $\tau\in 2^{[k]}$,
      $H_{\tau}=(h_{1,\tau(1)},..,h_{k,\tau(k)})
      :\mathbb{R}^k\to \mathbb{R}^k$ is regular;} 
      \notag\\
     &\text{
     for all $\tau\in 2^{[k]}$,
      $H_{\tau}=(h_{1,\tau(1)},..,h_{k,\tau(k)})
      :\mathbb{R}^k\to \mathbb{R}^k$ is proper;} \notag\\
       &\text{
       for all $i\in[k]$, 
      $\{f_i=0\}$ and $ \{g_i=0\}$ are connected;
       }\notag\\
       \tag{$*$}
       &\text{
       for all $i\in[k]$, 
      $\{f_i=0\}\cap \{g_i=0\}=\emptyset$.
       }
\end{align}
We list properties (\ref{c1}), (\ref{c2}), (\ref{c3}) that we are going to study.
In particular, we will want a sequence $(\square_{i,\epsilon}:i\in[k],\epsilon \in \{0,1\})
   \in\{\leq ,\geq \}^{[k]\times\{0,1\}}$
   that satisfies property (\ref{c2}):      \begin{align}
          &\text{For all $J\subsetneq [k]$ and all
$\tau\in 2^{J}$,
every non-empty component of $\underset{j\in J}{\bigcap}\{h_{j,{\tau(j)}}=0\}$}\notag\\
&\text{is unbounded. }\label{c1}
      \end{align}
      \begin{align}
         &\text{For  all $J\subsetneq [k]$ and all
$\tau\in 2^{J}$,
given $i\in [k]\setminus J$ and $C$ a component of}\notag
\\&\text{$ \underset{j\in  J}{\bigcap}\{h_{j,{\tau(j)}}=0\}$,
         $\{f_i\square_{i,0}0\}\cap\{g_i\inv(\square_{i,1})0\}
         \cap
         C$ and
         $\{f_i\inv(\square_{i,0})0\}\cap\{g_i\square_{i,1}0\}
         \cap
        C$ }\notag
        \\
        &\text{are unbounded. }\label{c2}
       \end{align}
       
       \begin{align}
         &\text{  For  all $J\subsetneq [k]$ and all
$\tau\in 2^{J}$,
given $i\in [k]\setminus J$,
and $C$ a component of }\notag\\
&\text{$ \underset{j\in  J}{\bigcap}\{h_{j,{\tau(j)}}=0\}$,
$f_i, g_i
    $ are regular on  $C$.}
    \label{c3}
      \end{align}
\begin{remark}\mbox{}
    \begin{enumerate}
        \item By regularity of the $H_\tau$'s in the assumption (\ref{hass}), 
        property (\ref{c3}) holds automatically by Lemma \ref{reglev} 
        and o-minimality that implies finite components for a definable set.
        \item    In property (\ref{c1}), (\ref{c2}),  
        it is equivalent to just consider $J\subsetneq[k]$ with $|J|=k-1$.
        
    \end{enumerate}
\end{remark}

We first prove lemmas about the level sets.

\begin{lemma}
    \label{ubdom}
  
 Let $f_1,...,f_k,g_1,...,g_k$ be $C^1$ real-valued functions on $\mathbb{R}^k$ 
 definable in $\mathcal{R}$ such that the assumption (\ref{hass}) holds.
 Then given 
$\tau\in 2^{J}$ for some $J\subsetneq [k]$, for all sequence $(\epsilon_j:j\in J)\in \mathbb{R}^J$,
every non-empty component of $\underset{j\in J}{\bigcap}\{h_{j,{\tau(j)}}=\epsilon_j\}$ is unbounded.    
\end{lemma}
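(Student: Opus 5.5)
The idea is to pick a function that is not yet used to cut out the level set and apply Lemma \ref{unbcompl} to it on a component. Fix $J\subsetneq[k]$, $\tau\in 2^J$ and $(\epsilon_j:j\in J)\in\mathbb{R}^J$. Let $C$ be a non-empty component of $L:=\bigcap_{j\in J}\{h_{j,\tau(j)}=\epsilon_j\}$. Since $J\neq[k]$, we may choose $i\in[k]\setminus J$. Extend $\tau$ to some $\tau'\in 2^{[k]}$, for instance by $\tau'(l)=0$ for $l\notin J$. By the assumption (\ref{hass}), $H_{\tau'}$ is regular on $\mathbb{R}^k$.

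\emph{Step 1: $C$ is a manifold on which $h_{i,\tau'(i)}$ is regular.} By Fact \ref{kertan} applied to $H_{\tau'}$, $L$ is a manifold of dimension $k-|J|\geq 1$. By Lemma \ref{reglev} with $I=J$ and the singleton $\{i\}$, the function $h_{i,\tau'(i)}$ is regular on $L$. By o-minimality $L$ has finitely many components, and by Lemma \ref{locconn} each component is open in $L$. Hence $C$ is itself a manifold, and $h_{i,\tau'(i)}|_C$ is regular.

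\emph{Step 2: $C$ is closed in $\mathbb{R}^k$.} The set $L$ is closed, being a preimage of a point under a continuous map. Each component of $L$ is closed in $L$, so $C$ is closed in $\mathbb{R}^k$.

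\emph{Step 3: apply Lemma \ref{unbcompl}.} Put $f:=h_{i,\tau'(i)}|_C$. Since $f$ is regular and $\dim C\geq 1$, $f$ has no local minimum on $C$ by Fact \ref{extsing}. So for any $x\in C$ there is $y\in C$ with $f(y)<f(x)$. Taking $t=f(x)$ gives $\{f<t\}\neq\emptyset$, and Lemma \ref{unbcompl} then says $\{f\leq t\}\subseteq C$ is unbounded. Therefore $C$ is unbounded.

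The point that needs care is Step 1: that a single component $C$, not just $L$, is a manifold that is closed in $\mathbb{R}^k$, so that Lemma \ref{unbcompl} applies to it. Openness of the component in $L$ comes from local connectedness, Lemma \ref{locconn}, or simply from $L$ being a manifold. Closedness of $C$ is the standard fact that components are closed. I would also check that $\dim C\geq 1$ is really used, which is why $J\subsetneq[k]$ is needed. If $\dim C=0$, $C$ is a point and $f$ vacuously has no smaller value, so the argument would break. With $J\neq[k]$ this case does not arise. Properness is not needed for this lemma.
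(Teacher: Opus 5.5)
Your proof is correct and is essentially the paper's argument. The paper assumes the component $C$ is bounded, hence compact, so $h_{j,\tau(j)}$ attains an extremum on $C$; that extremum is critical by Fact \ref{extsing}, contradicting Lemma \ref{reglev} since $C$ is open in the level set. Routing this through Lemma \ref{unbcompl} just repackages the same compactness-plus-extremum step. Your explicit extension $\tau'$ of $\tau$ is a useful clarification, since the paper writes $h_{j,\tau(j)}$ for $j\notin J$, where $\tau$ is undefined.
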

\begin{proof}
First observe that since $\mathcal{R}$ is o-minimal, 
$\underset{j\in J}{\bigcap}\{h_{j,{\tau(j)}}=\epsilon_j\}$
has finitely many components, so each of its components is clopoen relative to
$\underset{j\in J}{\bigcap}\{h_{j,{\tau(j)}}=\epsilon_j\}$. 

Suppose that a component $C$ of $\underset{j\in J}{\bigcap}\{h_{j,{\tau(j)}}=\epsilon_j\}$ is  bounded.
Then since $C$ is closed in $\underset{j\in J}{\bigcap}\{h_{j,{\tau(j)}}=\epsilon_j\}$, 
which is closed in $\mathbb{R}^k$, $C$ is compact.
So given $j\in [k]\setminus J$, $h_{j,\tau(j)}$ reaches extrema on the compact set $C$, 
which would give a singular point in $C$ by Fact \ref{extsing}.
Since $C$ is also open relative to  $\underset{j\in J}{\bigcap}\{h_{j,{\tau(j)}}=\epsilon_j\}$, 
this contradicts the regularity of $h_{j,\tau(j)}$ on
$\underset{j\in J}{\bigcap}\{h_{j,{\tau(j)}}=\epsilon_j\}$ given by Lemma \ref{reglev}.
\end{proof}

 \begin{lemma}\label{whoim}
Let $f_1,...,f_k,g_1,...,g_k$ be $C^1$ real-valued functions on $\mathbb{R}^k$ 
definable in $\mathcal{R}$ such that the assumption (\ref{hass}) holds.
 Then given 
$\tau\in 2^{J}$ with $J\subsetneq [k]$, 
for every non-empty component $C$ of $\underset{j\in J}{\bigcap}\{h_{j,{\tau(j)}}=0\}$ 
and every $h_{j,\epsilon}$ with $j\in [k]\setminus J$, $\epsilon\in\{0,1\}$,
\[
 \text{Im}(h_{j,\epsilon}|_{C} )
  =\mathbb{R}.
  \]
   \end{lemma}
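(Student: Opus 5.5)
Fix $J\subsetneq[k]$, $\tau\in 2^J$, a non-empty component $C$ of $Z:=\bigcap_{j\in J}\{h_{j,\tau(j)}=0\}$, and $j_0\in[k]\setminus J$, $\epsilon\in\{0,1\}$. The plan is to show that $\text{Im}(h_{j_0,\epsilon}|_C)$ is an interval that is bounded neither above nor below. Since $C$ is connected and $h_{j_0,\epsilon}$ is continuous, the image is an interval. So it suffices to rule out that it is bounded below; the upper case is symmetric, replacing $h$ by $-h$.

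First I extend $\tau$ to some $\sigma\in 2^{[k]}$ with $\sigma|_J=\tau$ and $\sigma(j_0)=\epsilon$. By assumption (\ref{hass}), $H_\sigma$ is regular and proper. By Fact \ref{kertan} and Lemma \ref{reglev}, $Z$ is a manifold and $h_{j_0,\epsilon}$ is regular on $Z$. Since $Z$ has finitely many components (o-minimality), $C$ is open in $Z$, so $h_{j_0,\epsilon}$ is also regular on $C$. Moreover $C$ is closed in $\mathbb{R}^k$, and it is unbounded by Lemma \ref{ubdom}.

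The key step uses properness. Suppose the image is bounded below, say $\text{Im}(h_{j_0,\epsilon}|_C)\subseteq(a,+\infty)$. I claim that $h_{j_0,\epsilon}(x)\to+\infty$ as $\|x\|\to\infty$ in $C$. If not, there is $t$ and a sequence $x_n\in C$ with $\|x_n\|\to\infty$ and $h_{j_0,\epsilon}(x_n)\le t$. Then $H_\sigma(x_n)$ has coordinates in $J$ equal to $0$ and $j_0$-coordinate in $[a,t]$, but the remaining coordinates $h_{l,\sigma(l)}$ for $l\notin J\cup\{j_0\}$ are uncontrolled. This is the main obstacle: properness of $H_\sigma$ only gives a contradiction if all coordinates are bounded along the sequence.

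To handle the obstacle, I would first treat the case $|J|=k-1$ directly. Then $J\cup\{j_0\}=[k]$, so $H_\sigma(x_n)$ lies in a compact set, and properness forces $(x_n)$ to be bounded, a contradiction. With the limit $+\infty$ established, Lemma \ref{exiextrem} gives a point of $C$ where $h_{j_0,\epsilon}|_C$ attains its minimum. Since $C$ is a $1$-manifold on which $h_{j_0,\epsilon}$ is regular, Fact \ref{extsing} gives a critical point, which is a contradiction.

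For general $J$ I would argue by downward induction on $|J|$. Take $l\notin J\cup\{j_0\}$ and any $c\in\mathbb{R}$ (I expect to need $\epsilon_l$-levels, as allowed by Lemma \ref{ubdom} and Lemma \ref{reglev} at arbitrary values). Pick a point $x\in C$ and a component $C'$ of $C\cap\{h_{l,\sigma(l)}=h_{l,\sigma(l)}(x)\}$. Applying the inductive statement to $C'$, with the whole argument run at arbitrary level values $(\epsilon_j)$ rather than $0$, gives that $h_{j_0,\epsilon}$ is unbounded below on $C'\subseteq C$. To make this work I would state and prove the stronger lemma for arbitrary levels $\bigcap_{j\in J}\{h_{j,\tau(j)}=\epsilon_j\}$; the base case $|J|=k-1$ is unchanged, since properness handles any fixed values. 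The statement of Lemma \ref{whoim} is then the special case where all levels are $0$.
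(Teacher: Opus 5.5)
Your proof is correct and follows essentially the paper's argument. You reduce to a one-dimensional level set, where properness of $H_\sigma$ forces $|h_{j_0,\epsilon}|\to\infty$. A one-sided bound then gives an attained extremum (Lemma \ref{exiextrem}, Fact \ref{extsing}), which contradicts the regularity from Lemma \ref{reglev}.

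The only difference is organizational. The paper slices all at once: it fixes each $h_{i,0}$ with $i\notin J\cup\{j_0\}$ at its value at a point of $C$ and works on $C\cap\bigcap_i\{h_{i,0}=\epsilon_i\}$. That way it needs neither your downward induction nor the restatement of the lemma at arbitrary levels. Your one-coordinate-at-a-time induction is equally valid, provided you add one step you left implicit: a component of $C\cap\{h_{l,\sigma(l)}=c\}$ is a component of the larger level set, because $C$ is clopen in $Z$.
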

   \begin{proof}
   Fix $\tau\in 2^{J}$ for some $J\subsetneq [k]$.
   By  Lemma \ref{ubdom}, given $C$  a  component of $\underset{j\in J}
   {\bigcap}\{h_{j,{\tau(j)}}=0\}$, 
   $C$ is unbounded and is closed in $\mathbb{R}^k$.
   Fix $h_{j,\epsilon}$ with $j\in [k]\setminus J$, $\epsilon\in\{0,1\}$.
  Notice that
if $[k]\setminus J\cup\{j\}\neq \emptyset$, for each $i\in [k]\setminus J\cup\{j\}$,
we can choose $\epsilon_{i}\in $ Im $h_{i,0}(C)$  
such that $C\cap \underset{\{i\in[k]:i\notin J\cup\{j\}\}}{\bigcap}\{h_{i,0}=\epsilon_i\}\neq \emptyset$:
Take $x\in C $ and for each $i\in [k]\setminus J\cup\{j\}$, 
let $\epsilon_i=h_{i,0}(x)$.
Then $x\in C\cap \underset{\{i\in[k]:i\notin J\cup\{j\}\}}{\bigcap}\{h_{i,0}=\epsilon_i\}$.
If $[k]\setminus J\cup\{j\}= \emptyset$, then we do not choose any $\epsilon_i$.
 
Observe that for each $x\in  C\cap \underset{\{i\in[k]:i\notin J\cup\{j\}\}}{\bigcap}\{h_{i,0}=\epsilon_i\}$,
if $D$ is the component  in $ \underset{j\in J}{\bigcap}
\{h_{j,\tau(j)}=0\}
\cap 
\underset{\{i\in[k]:i\notin J\cup\{j\}\}}{\bigcap}
\{h_{i,0}=\epsilon_i\}$ that contains $x$,
then $D\subseteq
C\cap \underset{\{i\in[k]:i\notin J\cup\{j\}\}}{\bigcap}\{h_{i,0}=\epsilon_i\}$.
By Lemma \ref{ubdom}, 
$D$ and thus $C\cap \underset{\{i\in[k]:i\notin J\cup\{j\}\}}{\bigcap}\{h_{i,0}=\epsilon_i\}$ are unbounded.
Hence, by properness in  assumption (\ref{hass}), \[
h_{j,\epsilon} (C\cap \underset{\{i\in[k]:i\notin J\cup\{j\}\}}{\bigcap}\{h_{i,0}=\epsilon_i\})
\]
is unbounded.
In particular, $h_{j,\epsilon}(C)$ is unbounded.
  If $h_{j,\epsilon}(C)$ is not all of $\mathbb{R}$,
  then by connectedness of $C$, $h_{j,\epsilon}(C)\subseteq (c,+\infty)$ for some $c\in\mathbb{R}$, 
  or  $\subseteq (-\infty,c)$ for some $c\in\mathbb{R}$.
  WLOG, we may assume that  $h_{j,\epsilon}(C)\subseteq (c,+\infty)$ for some $c\in\mathbb{R}$.
  Since $h_{j,\epsilon}(C)\subseteq (c,+\infty)$ and by properness, 
  $\underset{\|x\|\to\infty}{\lim}
    |h_{j,\epsilon}(x)|=+\infty$ in $C\cap \underset{\{i\in[k]:i\notin J\cup\{j\}\}}{\bigcap}\{h_{i,0}=\epsilon_i\}$,
    we have  $\underset{\|x\|\to\infty}{\lim}
    h_{j,\epsilon}(x)=+\infty$ in $C\cap \underset{\{i\in[k]:i\notin J\cup\{j\}\}}{\bigcap}\{h_{i,0}=\epsilon_i\}$.
  So $h_{j,\epsilon}$  has a local extremum in $C\cap \underset{\{i\in[k]:i\notin J\cup\{j\}\}}{\bigcap}\{h_{i,0}=\epsilon_i\}$  by Lemma \ref{exiextrem},
  and hence a  singular point in $C\cap \underset{\{i\in[k]:i\notin J\cup\{j\}\}}{\bigcap}\{h_{i,0}=\epsilon_i\}$.
  But by Lemma \ref{reglev}, $h_{j,\epsilon}$ is regular
  on $\underset{j\in J}{\bigcap}\{h_{j,{\tau(j)}}=0\}
  \cap \underset{\{i\in[k]:i\notin J\cup\{j\}\}}{\bigcap}\{h_{i,0}=\epsilon_i\}$,
  and thus on $C\cap \underset{\{i\in[k]:i\notin J\cup\{j\}\}}{\bigcap}\{h_{i,0}=\epsilon_i\}$ which is open relative to
  $\underset{j\in J}{\bigcap}\{h_{j,{\tau(j)}}=0\}
  \cap \underset{\{i\in[k]:i\notin J\cup\{j\}\}}{\bigcap}\{h_{i,0}=\epsilon_i\}$, a contradiction.

   \end{proof}
   
   Lemma \ref{ubdom} and Lemma \ref{whoim} imply the following corollary:
   \begin{corollary}\label{leset}
      Let $f_1,...,f_k,g_1,...,g_k$ be $C^1$ real-valued functions on $\mathbb{R}^k$ 
definable in $\mathcal{R}$ such that the assumption (\ref{hass}) holds.

 Then given 
$\tau\in 2^{J}$ for some $J\subsetneq [k]$, 
$\underset{j\in J}{\bigcap}\{h_{j,{\tau(j)}}=0\}\neq \emptyset$ and every component of it is unbounded.
   \end{corollary}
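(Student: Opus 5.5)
We argue by induction on $|J|$, with $J\subsetneq[k]$. Unboundedness of every component is exactly Lemma \ref{ubdom} with all $\epsilon_j=0$, so the only new content is non-emptiness, and the induction will build it up one level set at a time.

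\emph{Base case.} For $J=\emptyset$ the empty intersection is $\mathbb{R}^k$ itself, which is non-empty, connected and unbounded.

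\emph{Inductive step.} Suppose the statement holds for some $J\subsetneq[k]$ with $|J|\leq k-2$, so that $J\cup\{j_0\}$ is still a proper subset of $[k]$ for any $j_0\in[k]\setminus J$. Fix $\tau\in 2^{J\cup\{j_0\}}$ and write $\tau|_J$ for its restriction to $J$.
\begin{enumerate}
\item By the inductive hypothesis, $\bigcap_{j\in J}\{h_{j,\tau(j)}=0\}\neq\emptyset$. Choose a non-empty component $C$ of this set.
\item Apply Lemma \ref{whoim} to $\tau|_J$, the component $C$, and the function $h_{j_0,\tau(j_0)}$ with $j_0\notin J$. It gives $\text{Im}(h_{j_0,\tau(j_0)}|_C)=\mathbb{R}$, and in particular $0$ is attained on $C$.
\item Hence there is a point of $C$ lying in $\{h_{j_0,\tau(j_0)}=0\}$, so
\[
\bigcap_{j\in J\cup\{j_0\}}\{h_{j,\tau(j)}=0\}\supseteq C\cap\{h_{j_0,\tau(j_0)}=0\}\neq\emptyset .
\]
\end{enumerate}

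\emph{Unboundedness.} For any $J\subsetneq[k]$ and $\tau\in 2^J$, every component of $\bigcap_{j\in J}\{h_{j,\tau(j)}=0\}$ is unbounded by Lemma \ref{ubdom}, applied with $\epsilon_j=0$ for all $j\in J$.

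\emph{Caveat on the main step.} The only point needing care is that Lemma \ref{whoim} was proved for non-empty components. The inductive hypothesis supplies exactly such a component, and the step never needs $|J|=k$. Formally Lemma \ref{whoim} includes the case $|J|=k-1$, but the induction only invokes it for $|J|\leq k-2$. So, beyond the bookkeeping of the induction, no real obstacle is expected.
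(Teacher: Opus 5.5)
Your proposal is correct and matches the paper's argument: the paper's proof of Corollary~\ref{leset} says only to induct on $|J|$, using Lemma~\ref{whoim} for non-emptiness and Lemma~\ref{ubdom} for unboundedness. You fill in that induction correctly, with base case $J=\emptyset$ and then a zero of $h_{j_0,\tau(j_0)}$ on a component of the previous intersection; the only cosmetic fix is to state the inductive hypothesis for all $J$ of a given size and all $\tau\in 2^J$, which is how you in fact use it.
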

\begin{proof}
    For the non-emptyness, use an inductive argument on $|J|$ and apply Lemma \ref{whoim}.
    The unboundedness follows from Lemma \ref{ubdom}.
 \end{proof}

  So far, we have been proving properties for level sets $\underset{j\in J}{\bigcap}\{h_{j,{\tau(j)}}=0\} $ with $J\subsetneq [k]$.
  We now deal with the case when $J=[k]$.
   \begin{corollary}
\label{uniqint}   Let $f_1,...,f_k,g_1,...,g_k$ be $C^1$ real-valued functions on 
$\mathbb{R}^k$  definable in $\mathcal{R}$ such that the assumption (\ref{hass}) holds.
   Then for all $\tau\in[k]$ and  all $i\in[k]$, 
   if $C$ is a connected component of $\underset{\{l\in[k]:l\neq i\}}{\bigcap}
   \{h_{l,\tau(l)}=0\}$,
   then the sets
   \[
   \{f_i= 0\}\cap 
C\text{ and }
\{  g_i=0\}\cap 
C
\]   are singletons.    \end{corollary}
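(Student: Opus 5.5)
The plan is to treat $C$ as a connected, unbounded, one-dimensional manifold closed in $\mathbb{R}^k$, and then show that $f_i|_C$ (and likewise $g_i|_C$) is a strictly monotone surjection onto $\mathbb{R}$. Its zero set on $C$ is then exactly one point. I read $\tau\in[k]$ in the statement as $\tau\in 2^{[k]}$; only the values $\tau(l)$ with $l\neq i$ matter. Set $J=[k]\setminus\{i\}$, which is a proper subset of $[k]$.

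\emph{Existence.} By Corollary \ref{leset}, the set $\bigcap_{l\in J}\{h_{l,\tau(l)}=0\}$ is nonempty and each of its components is unbounded. Lemma \ref{whoim}, applied to $J$ and $j=i$, gives $\text{Im}(f_i|_C)=\text{Im}(g_i|_C)=\mathbb{R}$. In particular $0$ is attained, so $\{f_i=0\}\cap C\neq\emptyset$ and $\{g_i=0\}\cap C\neq\emptyset$.

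\emph{Structure of $C$.} By Fact \ref{kertan} applied to $H_\tau$, the set $\bigcap_{l\in J}\{h_{l,\tau(l)}=0\}$ is a manifold of dimension $k-|J|=1$. By o-minimality it has finitely many components, so $C$ is open and closed in it. Hence $C$ is a connected $1$-manifold, closed in $\mathbb{R}^k$, and unbounded by Lemma \ref{ubdom}. By the classification of connected $1$-manifolds, $C$ is homeomorphic to $S^1$ or to $\mathbb{R}$. The first option is excluded because $S^1$ is compact and $C$ is unbounded. So there is a homeomorphism $\gamma:\mathbb{R}\to C$. If one prefers to avoid the classification theorem, an alternative is to use cell decomposition of the definable curve $C$ into finitely many points and arcs.

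\emph{Uniqueness.} Suppose $x\neq y$ are both in $\{f_i=0\}\cap C$, say $x=\gamma(s)$ and $y=\gamma(t)$ with $s<t$. The arc $\gamma([s,t])$ is compact, and $f_i$ vanishes at both endpoints. If $f_i\circ\gamma$ is not identically zero on $[s,t]$, it attains a nonzero maximum or minimum at some interior point. If it is identically zero, every interior point is an extremum. In either case we get a local extremum of $f_i|_C$ at a point of $\gamma((s,t))$, which is open in $C$. Fact \ref{extsing} then makes that point a critical point of $f_i|_C$. This contradicts Lemma \ref{reglev}, which (with $I=J$ and the index set $\{i\}$) says $f_i$ is regular on $\bigcap_{l\in J}\{h_{l,\tau(l)}=0\}$, hence on the relatively open subset $C$. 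Here we use $H_\tau$ with $\tau(i)=0$; for $g_i$ we use the $\tau$ with $\tau(i)=1$ and run the same argument.

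The main obstacle is the structural step: justifying that $C$ is an arc parametrized by $\mathbb{R}$, so that "the segment between two zeros" makes sense and is compact. Everything else is a direct combination of Lemmas \ref{reglev}, \ref{ubdom}, \ref{whoim} and Fact \ref{extsing}.
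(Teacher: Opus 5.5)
Your proof is correct and takes essentially the same route as the paper. You get existence of a zero from Lemma \ref{whoim}, and uniqueness from a Rolle-type argument on the arc $\gamma([s,t])$ between two zeros, which yields a critical point contradicting the regularity given by Lemma \ref{reglev}; you are slightly more careful than the paper in justifying the parametrization $\gamma:\mathbb{R}\to C$ (which the paper simply asserts) and in treating the case $f_i\circ\gamma\equiv 0$ directly instead of using finiteness of the zero set to pick adjacent zeros.
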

\begin{proof}

Fix a $\tau\in[k]$ and  an $i\in[k]$.
   Let  $C$ be a connected component of 
   $\underset{\{l\in[k]:l\neq i\}}{\bigcap}\{h_{l,\tau(l)}=0\}$, 
   and let $\gamma:\mathbb{R}\to C$ be a homeomorphic parametrization of  $C$.
   By regularity and o-minimality, $  \{f_i= 0\}\cap 
C$ is a finite set of discrete points.
By Lemma \ref{whoim}, $  \{f_i= 0\}\cap 
C$ has at least one point, say $p=\gamma(a)$ for some $a\in \mathbb{R}$.
Suppose that $q=\gamma(b)$ is another point in $  \{f_i= 0\}\cap 
C$  such that for all $x$ between $a$ and $b$, $f_i(\gamma(x))\neq 0$.
WLOG, we may assume that $a<b$.
Then $\gamma([a,b])$ is compact and $f_i$ reaches extrema on it.
If an extremum is in the interior, then $f_i$ has a singular point on $\underset{\{l\in[k]:l\neq i\}}{\bigcap}
\{h_{l,\tau(l)}=0\}$, 
contradicting Lemma \ref{reglev}.
So both the maximum and the minimum are realized by $p,q$.
But then, $f_i(y)=0$ for all $y\in\gamma([a,b])$, a contradiction.
Hence there is at most one point in  $  \{f_i= 0\}\cap 
C$.

The proof for $ \{g_i= 0\}\cap 
C$ is similar.
\end{proof}

   The following theorem says that the assumption (\ref{hass}) implies 
that property (\ref{c2}) holds for some a sequence 
$(\square_{i,\epsilon}:i\in[k],\epsilon \in \{0,1\})\in\{\leq ,\geq \}^{[k]\times\{0,1\}}$.
\begin{theorem}
    \label{exim}
   
      Let $f_1,...,f_k,g_1,...,g_k$ be $C^1$ real-valued functions 
      on $\mathbb{R}^k$  definable in $\mathcal{R}$ such that the assumption (\ref{hass}) holds.

   Then there exists
a sequence $(\square_{i,\epsilon}:i\in[k],\epsilon \in \{0,1\})\in\{\leq ,\geq \}^{[k]\times\{0,1\}}$ such that property (\ref{c2}) holds.
\end{theorem}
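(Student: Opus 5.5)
By the second remark after (\ref{c3}), it suffices to treat $J=[k]\setminus\{i\}$ for each $i\in[k]$. So fix $i$, $\tau\in 2^{J}$ and a component $C$ of $\bigcap_{l\neq i}\{h_{l,\tau(l)}=0\}$. By Fact \ref{kertan} and Corollary \ref{leset}, $C$ is a connected unbounded one-dimensional manifold, closed in $\mathbb{R}^k$. Definable and o-minimal, it is homeomorphic to $\mathbb{R}$, so I parametrize it by $\gamma:\mathbb{R}\to C$ as in Corollary \ref{uniqint}. By Lemma \ref{reglev}, $f_i\circ\gamma$ and $g_i\circ\gamma$ have no critical points, so they are strictly monotone. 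By Lemma \ref{whoim} their images are $\mathbb{R}$, and by Corollary \ref{uniqint} each has exactly one zero, at $a$ and $b$ respectively; by (d), $a\neq b$. Consequently, near $+\infty$ the pair of signs $(\operatorname{sgn} f_i,\operatorname{sgn} g_i)$ on $C$ is some fixed $(\sigma_f,\sigma_g)$, near $-\infty$ it is $(-\sigma_f,-\sigma_g)$, and both tails are unbounded subsets of $C$.

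Next I reduce (\ref{c2}) to a sign condition. Write $s(\leq)=-1$ and $s(\geq)=+1$. Property (\ref{c2}) for this $C$ requires one end of $C$ to lie in $\{f_i\square_{i,0}0\}\cap\{g_i\inv(\square_{i,1})0\}$ and the other end to lie in $\{f_i\inv(\square_{i,0})0\}\cap\{g_i\square_{i,1}0\}$. Since the two ends carry opposite sign pairs, this holds exactly when $\sigma_f\sigma_g=-s(\square_{i,0})s(\square_{i,1})$. Thus everything reduces to the quantity $\rho_C:=\sigma_f\sigma_g$, which equals $+1$ iff $f_i\circ\gamma$ and $g_i\circ\gamma$ are monotone in the same direction. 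The key claim is that $\rho_C$ depends only on $i$, not on $\tau$ or $C$. Granting this, I choose $\square_{i,0}$ arbitrarily and choose $\square_{i,1}$ so that $s(\square_{i,0})s(\square_{i,1})=-\rho_i$; this gives (\ref{c2}).

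To prove the key claim, fix $x\in C$ and a tangent vector $v\neq 0$ spanning $T_xC=\ker d_x(\pi_{[k]\setminus\{i\}}\circ H_\tau)$ (Fact \ref{kertan}). Let $\tau_0,\tau_1\in 2^{[k]}$ extend $\tau$ with $\tau_0(i)=0$ and $\tau_1(i)=1$, so that $H_{\tau_0}$ has $f_i$ in the $i$-th slot and $H_{\tau_1}$ has $g_i$ there; the two maps agree in every other slot. By Cramer's rule (expanding in the $i$-th row), $d_xf_i(v)$ and $d_xg_i(v)$ are the same nonzero multiple of $\det d_xH_{\tau_0}$ and $\det d_xH_{\tau_1}$, respectively, because the cofactors coming from the other rows coincide. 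Hence
\[
\operatorname{sgn}\bigl(d_xf_i(v)\,d_xg_i(v)\bigr)=\operatorname{sgn}\bigl(\det d_xH_{\tau_0}\cdot \det d_xH_{\tau_1}\bigr).
\]
Both maps are regular on the connected set $\mathbb{R}^k$, so each determinant is continuous and nowhere zero, and therefore has constant sign. So $\rho_C$ equals the constant sign of $\det dH_{\tau_0}\det dH_{\tau_1}$.

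The remaining, and I think main, obstacle is that this constant must not depend on $\tau$ on the other coordinates. I would handle this by a flip argument: changing $\tau(l)$ for a single $l\neq i$ multiplies both determinants by the sign of $\det dH$ for the corresponding $l$-swap pair. These factors are the same for the two determinants, so they cancel in the product. If this bookkeeping breaks down, the fallback is to compare through a common point using the connectedness hypothesis (c) and the fact that every such curve meets $\{f_i=0\}$, which follows from Lemma \ref{whoim}.
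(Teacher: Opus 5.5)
\textbf{There is a genuine gap: the step you flag as the main obstacle is not proved, and your argument for it cannot work.}

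Your reduction is sound. On each curve $C$, property (\ref{c2}) is equivalent to $\rho_C=-s(\square_{i,0})s(\square_{i,1})$. Your Cramer's-rule computation correctly gives $\rho_C=\epsilon(\tau_0)\epsilon(\tau_1)$, where $\epsilon(\tau)$ is the constant sign of $\det dH_\tau$ on $\mathbb{R}^k$. So for fixed $\tau$, $\rho_C$ does not depend on the component $C$. What is missing is independence of $\tau$ on the other coordinates.

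\textbf{The flip argument is circular.} To say that flipping $\tau(l)$ multiplies $\det dH_{\tau_0}$ and $\det dH_{\tau_1}$ by the same sign is to say that $\epsilon(\tau_0)\epsilon(\tau_0^{(l)})=\epsilon(\tau_1)\epsilon(\tau_1^{(l)})$. By your own Cramer identity, that is the key claim for the index $l$ (independence of the $i$-th slot), i.e.\ the same statement with $i$ and $l$ interchanged.

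\textbf{No determinant bookkeeping can prove it.} Such an argument uses only (a), and the claim is false under (a), (b), (c) alone. For $k=2$ take $f_1=x$, $g_1=y$, $f_2=x+y$, $g_2=x-y$.
\begin{itemize}
\item All four $H_\tau$ are linear isomorphisms, and all zero sets are lines, so (a), (b), (c) hold.
\item For $(\tau(1),\tau(2))=(0,0),(0,1),(1,0),(1,1)$ the determinants are $1,-1,-1,-1$, whose product is $-1$.
\item On $\{f_2=0\}$ the functions $f_1,g_1$ are monotone in opposite directions; on $\{g_2=0\}$ they are monotone in the same direction.
\item Hence (\ref{c2}) fails for every choice of the $\square$'s.
\end{itemize}
Only (d) fails here, since $\{f_1=0\}\cap\{g_1=0\}=\{0\}$. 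So any proof must use (d) for more than $a\neq b$.

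\textbf{The missing idea is the paper's argument, which combines (c) and (d) directly.} By (d), $f_i$ never vanishes on the connected set $\{g_i=0\}$, so it has a constant sign $\sigma^*_f$ there. Likewise $g_i$ has a constant sign $\sigma^*_g$ on $\{f_i=0\}$. Take any $\tau$ and any curve $C$, with $p=\gamma(a)$ and $q=\gamma(b)$ the unique zeros of $f_i$ and $g_i$. Strict monotonicity then gives $\rho_C=-\sigma^*_f\sigma^*_g$, both when $a<b$ and when $a>b$. So it suffices to choose $\square_{i,0}$ by the sign of $f_i$ on $\{g_i=0\}$ and $\square_{i,1}$ by the sign of $g_i$ on $\{f_i=0\}$. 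The paper reads these signs off reference curves for $\tau\equiv 0$ and shows they agree on every other curve.

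\textbf{Your fallback points in this direction but does not get there, for two reasons.}
\begin{itemize}
\item By (d), the curves for different $\tau$ are pairwise disjoint, so there is no common point to compare through.
\item The sign of $g_i$ at the zero of $f_i$ does not by itself determine $\rho_C$. You also need the sign of $f_i$ at the zero of $g_i$, which records on which side of $p$ the point $q$ lies.
\end{itemize}
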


\begin{proof}
It suffices to show that there exists a sequence $(\square_{i,\epsilon}:i\in[k],\epsilon \in \{0,1\})\in\{\leq ,\geq \}^{[k]\times\{0,1\}}$ such that the case for $|J|=k-1$ holds.

Let $\tau_0:[k]\to\{0\}$ be constantly zero.
For each $j\in[k]$,  fix $C_j$ a component of $ \underset{\{i\in[k]:i\neq j\}}{\bigcap}\{h_{i,{\tau_0(i)}}=0\}$.   
To choose the $\square$'s, for each $j\in[k]$, 
consider the singletons $\{p_j\}=\{f_j=0\}\cap C_j$ and  $\{q_j\}=\{g_j=0\}\cap C_j$.
Take $\square_{j,0}\in\{\leq,\geq \}$ such that $f_j(q_j)\square_{j,0}0$;
take $\square_{j,1}\in\{\leq,\geq \}$ such that $g_j(p_j)\square_{j,1}0$.

It remains to check that such choice of the $\square_{j,\epsilon}$'s work for all $\tau\in2^{[k]}$.
We do this by showing the following claim.

\begin{claim}
    Given $\tau\in2^{[k]}$, $j\in[k]$, and $D$ a component of 
    $ \underset{\{i\in[k]:i\neq j\}}{\bigcap}\{h_{i,{\tau(i)}}=0\}$,
by Corollary \ref{uniqint},
let $\{p\}=\{f_j=0\}\cap D$ and  $\{q\}=\{g_j=0\}\cap D$.
    Then 
    \begin{enumerate}
        \item 
    $f_j(q)\square_{j,0}0$ and $g_j(p)\square_{j,1}0$;
    \item 
    $\{f_j\square_{j,0} 0\}\cap \{g_j\inv(\square_{j,1}) 0\}\cap
         D$ and
        $\{f_j\inv(\square_{j,0}) 0\}\cap \{g_j\square_{j,1}0\}\cap
         D$
       are unbounded.
    \end{enumerate}
\end{claim}
\begin{proof}
    [Proof of Claim]
      Suppose for contradiction that,
        say $f_j(q)\inv(\square_{j,0})0$.
       Since $q,q_j\in\{g_j=0\}$,
       by the connectedness of $\{g_j=0\}$ given in the assumption (\ref{hass}),
  there is $x\in \{g_j=0\}$ such that $f_j(x)=0$.
       Hence $\{f_j=0\}\cap \{g_j=0\}
       \neq\emptyset$, contradicting  the assumption (\ref{hass}).
Similarly, $g_j(p)\square_{j,1}0$.

For 2., let $\gamma:\mathbb{R}\to D$ be a continuous parametrization of $D$.
Let $p=\gamma(a)$ and $q=\gamma(b)$ for some  $a,b\in\mathbb{R}$.
WLOG, we may assume that $a<b$.
Given $x\in (b,+\infty)$, if  $f_j(\gamma(x))\inv(\square_{j,0})0$,
then there is $y\in (b,x)$ such that 
$f_j(\gamma(y))=0$, contradicting Corollary \ref{uniqint}.
So $\gamma((b,+\infty))\subseteq 
\{f_j\square_{j,0} 0\}\cap
         D$.
         Similarly, since $g_j(p)\square_{j,1}0$,
         for all $x\in(-\infty,b)$, $g_j(\gamma(x))\square_{j,1}0$.
         By Lemma \ref{whoim}, there is $y\in (b,+\infty)$ such that 
$g_j(\gamma(y))\inv(\square_{j,1})0$.
By IVT and Corollary \ref{uniqint}, 
        $\gamma((b,+\infty))\subseteq 
\{g_j\inv(\square_{j,1}) 0\}\cap
         D$.
         
Hence, $\gamma((b,+\infty))\subseteq 
\{f_j\square_{j,0} 0\}\cap 
\{g_j\inv(\square_{j,1}) 0\}\cap
         D$, which is thus unbounded.
         The same argument shows that  $\gamma((-\infty,a))\subseteq 
         \{f_j\inv(\square_{j,0}) 0\}\cap \{g_j(\square_{j,1}) 0\}\cap
         D$,
         and hence it
       is unbounded.
\end{proof}

\end{proof}

 The following lemma is the base case of Theorem \ref{finstep}.
   We write it as a separate lemma for clarity.
  \begin{lemma}\label{ubtocm1d}

       Let $f_1,...,f_k,g_1,...,g_k$ be $C^1$ real-valued functions on $\mathbb{R}^k$  definable in $\mathcal{R}$ such that the assumption (\ref{hass}) holds, 
and let
   a sequence $(\square_{i,\epsilon}:i\in[k],\epsilon \in \{0,1\})\in\{\leq ,\geq \}^{[k]\times\{0,1\}}$ satisfy  property (\ref{c2}).
   
   Then for   all $\tau\in 2^{[k]}$ and all $i\in[k]$,
      \[K:=
\{f_i\square_{i,0}0\}\cap\{g_i\square_{i,1}0\}
\cap 
\underset{\{l\in[k]:l\neq i\}}{\bigcap}
\{h_{l,\tau(l)}=0\}
  \] is compact.
  \footnote{$\bigcap_{l\neq i}\{h_{l,\tau(l)}=0\}$ is a $1$-manifold.}
\end{lemma}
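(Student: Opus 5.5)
The plan is to reduce to a one-dimensional picture on each component of the curve
\[
L:=\bigcap_{l\neq i}\{h_{l,\tau(l)}=0\}.
\]
By Fact \ref{kertan}, $L$ is a $1$-manifold. By properness of $H_\tau$ in (\ref{hass}) it is closed in $\mathbb{R}^k$, and by o-minimality it has finitely many components. Hence $K$ is compact as soon as $K\cap D$ is compact for each component $D$ of $L$. Since $K\cap D$ is closed, it suffices to show that it is bounded.

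Fix a component $D$. By Corollary \ref{leset} (with Lemma \ref{ubdom}), $D$ is unbounded, so it is a closed non-compact connected $1$-manifold. I would therefore take a homeomorphic parametrization $\gamma:\mathbb{R}\to D$, as in the proof of Corollary \ref{uniqint}. I would also check that both ends of $\gamma$ leave every bounded set. This holds because $D$ is closed in $\mathbb{R}^k$ and $\gamma$ is a proper embedding. Definability, via curve selection or monotonicity, gives a limit at each end, and that limit cannot be a finite point of $D$.

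By Corollary \ref{uniqint}, $\{f_i=0\}\cap D=\{\gamma(a)\}$ and $\{g_i=0\}\cap D=\{\gamma(b)\}$, with $a\neq b$ by (\ref{hass}). The Claim in the proof of Theorem \ref{exim} describes the sign pattern. WLOG $a<b$. That argument only uses Corollary \ref{uniqint}, Lemma \ref{whoim} and the connectedness in (\ref{hass}), so I will use property (\ref{c2}) directly instead of the specific choice of $\square$'s. Then $f_i$ and $g_i$ have constant strict sign on each of $(-\infty,a)$, $(a,b)$ and $(b,\infty)$ along $\gamma$, by the IVT and uniqueness of zeros.

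Property (\ref{c2}) says that both $\{f_i\square_{i,0}0\}\cap\{g_i\inv(\square_{i,1})0\}\cap D$ and $\{f_i\inv(\square_{i,0})0\}\cap\{g_i\square_{i,1}0\}\cap D$ are unbounded. A set of the form $\gamma(I)$ is unbounded only if $I$ contains a ray, and $\gamma([a,b])$ is compact. So these two sign patterns must be realized exactly on the two rays $(-\infty,a)$ and $(b,\infty)$, one on each. Indeed, each unbounded set must meet a ray. On a ray the sign pair of $(f_i,g_i)$ is constant, and the two patterns are incompatible in their strict parts.

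Consequently neither ray lies in $\{f_i\square_{i,0}0\}\cap\{g_i\square_{i,1}0\}$ except possibly at the endpoints $a,b$. Hence $K\cap D\subseteq\gamma([a,b])$, which is compact, and $K\cap D$ is bounded.

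The main obstacle is the bookkeeping with non-strict inequalities. The sets in (\ref{c2}) use $\square$ rather than the strict $\square'$, so I must rule out a single ray carrying both patterns. That would happen only if $f_i$ or $g_i$ vanished on the whole ray, which Corollary \ref{uniqint} forbids. I therefore expect to pass to the strict signs, using the fact that the zeros of $f_i$ and $g_i$ on $D$ are the single points $\gamma(a),\gamma(b)$. After that, each ray carries a definite strict sign pair, and the two patterns from (\ref{c2}) force these pairs to be $(\square'_{i,0},\inv(\square_{i,1})')$ and $(\inv(\square_{i,0})',\square'_{i,1})$ on the two rays.
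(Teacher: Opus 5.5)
Your proposal is correct and follows essentially the same route as the paper. You parametrize each component $D$ by $\gamma$, locate the unique zeros $\gamma(a),\gamma(b)$ via Corollary \ref{uniqint}, and use property (\ref{c2}) to force the two mixed sign patterns onto the two rays, so that $K\cap D\subseteq\gamma([a,b])$. Your direct pigeonhole over the rays is slightly cleaner than the paper's case-by-case contradiction on the signs of $g_i(p)$ and $f_i(q)$, and unlike the paper you do not need Lemma \ref{whoim} to produce a sign change.
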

\begin{proof}

Fix $i\in[k]$ and  $\tau\in 2^{[k]}$.
By o-minimality, it suffices to show that  each connected component $C$ 
of $\bigcap_{l\neq i}\{h_{l,\tau(l)}=0\}$ satisfies that 
$\{f_i\square_{i,0}0\}\cap\{g_i\square_{i,1}0\}\cap 
C $ is compact.
   
   Fix $C$  an unbounded component of $\bigcap_{l\neq i}\{h_{l,\tau(l)}=0\}$.
   Since connected components  of $\underset{\{l\in[k]:l\neq i\}}{\bigcap}
   \{h_{l,\tau(l)}=0\}$ 
   are closed relative to $\underset{\{l\in[k]:l\neq i\}}{\bigcap}
   \{h_{l,\tau(l)}=0\}$,
   $C$ is a closed subset in $\mathbb{R}^{k}$. 
   To show that $\{f_i\square_{i,0}0\}\cap
   \{g_i\square_{i,1}0\}\cap 
C $ is compact,
by continuity of the $f_i$'s and $g_i$'s, it suffices to show that it is bounded.

  Let $C$ be parametrized by a continuous map $\gamma:\mathbb{R}\to C$.
   By Corollary \ref{uniqint},  
   let $p=\gamma(a)$ be the unique point in 
      $\{  f_i=0 \}\cap 
C$;
$q=\gamma(b)$ be the unique point in $\{ g_i= 0\}\cap 
C$.
   By  the assumption (\ref{hass}),
   $p\neq q$.

     By property (\ref{c2}),   $\{f_i\square_{i,0}0\}\cap\{g_i\inv(\square_{i,1})0\}
         \cap
         C$ and
         $\{f_i\inv(\square_{i,0})0\}\cap\{g_i\square_{i,1}0\}
         \cap
        C$ 
       are unbounded.
 \begin{itemize}
     \item \underline{Case $a< b$}:
    Suppose for contrdiction that $g_i(p)\inv(\square_{i,1}) 0$     and $f_i(q)\square_{i,0} 0$.
 \begin{claim}
     For all $x\in (b,+\infty)$, $g_i(\gamma(x))\square_{i,1}' 0$.
 \end{claim}
 \begin{proof}
     [Proof of Claim]
We first prove that for all $x\in (-\infty,b)$, $g_i(\gamma(x))\inv(\square_{i,1}) 0$.
  Otherwise, there is  $y\in (-\infty,b)$ such that $g_i(\gamma(y))\square_{i,1} 0$, and by IVT, there is $z\in (-\infty,b)$, $g_i(\gamma(z))= 0$, a contradiction.

Now by Lemma \ref{whoim},
since Im($g_i|_C$)$=\mathbb{R}$,
there is $y\in (b,+\infty)$ such that $g_i(\gamma(y))\square_{i,1}' 0$.
By the same argument using IVT, 
for all $x\in (b,+\infty)$, $g_i(\gamma(x))\square_{i,1}' 0$.
 \end{proof}
  Similarly, for all $x\in (-\infty,a)$, $f_i(\gamma(x))\inv(\square_{i,0}) 0$.
    Hence,
    \(\{f_i\square_{i,0}0\}
    \cap
    \{g_i\inv(\square_{i,1})0\}
         \cap
         C
         \subseteq \gamma([a,b]),\) contradicting property (\ref{c2}).
         
           Similarly, if $g_i(p)\square_{i,1} 0$ and 
           $f_i(q)\inv(\square_{i,0}) 0$,
           then   $\{f_i\inv(\square_{i,0})0\}
           \cap\{g_i\square_{i,1}0\}
         \cap
        C
        \subseteq \gamma([a,b])$, contradicting property (\ref{c2}).
          
          Hence, we must have  $g_i(p)\square_{i,1} 0$ and $f_i(q)\square_{i,0} 0$, or $g_i(p)\inv(\square_{i,1}) 0$ and $f_i(q)\inv(\square_{i,0}) 0$.
          By the same reasoning using IVT, this implies that $\{f_j\square_{j,0}0\}\cap\{g_j\square_{j,1}0\}\cap 
C \subseteq\gamma([a,b])$,
or 
 $\{f_j\square_{j,0}0\}\cap\{g_j\square_{j,1}0\}\cap 
C =\emptyset$, and hence the set is bounded.
     \item \underline{Case  $a> b$}:
     Similar.
 \end{itemize}
      Hence, $\{f_i\square_{i,0}0\}\cap\{g_i\square_{i,1}0\}\cap 
C $ must be  bounded.
      
\end{proof}

 Now we prove the main theorem.
 The idea is that when we have compact building blocks of lower dimension, 
 we can use these  building blocks to enclose a compact region.

\begin{theorem}\label{finstep}
  Let $M$ be a $k$-cell in $\mathbb{R}^n$.
     Let $f_1,...,f_k,g_1,...,g_k$ be $C^1$ real-valued functions on $M$  definable in $\mathcal{R}$ such that the assumption (\ref{hass}) holds.

      Then there exists a sequence $(\square_{i,\epsilon}:i\in[k],\epsilon \in \{0,1\})\in\{\leq ,\geq \}^{[k]\times\{0,1\}}$ 
      such that for all $J\subseteq [k]$ and all $\tau\in2^{[k]}$,
      the enclosed region \[\bigcap_{i\in [k]\setminus J}
\{f_i\square_{i,0}0\}\cap\{g_i\square_{i,1}0\}\cap 
\bigcap_{l\in J}\{h_{l,\tau(l)}=0\}
  \] is compact.

  In particular, if $J=\emptyset$
  then \[
  \bigcap_{i\in [k]}
\{f_i\square_{i,0}0\}\cap\{g_i\square_{i,1}0\}
  \] is compact.
\end{theorem}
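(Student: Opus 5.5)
The plan is to take the sequence $(\square_{i,\epsilon})$ given by Theorem \ref{exim}, so that property (\ref{c2}) holds, and then prove compactness by downward induction on $|J|$. First we reduce from a $k$-cell $M\subseteq\mathbb{R}^n$ to $\mathbb{R}^k$. A $k$-cell is definably homeomorphic, and in fact $C^1$-diffeomorphic, to $\mathbb{R}^k$: it is the graph of a $C^1$ map over an open cell, and an open cell is definably diffeomorphic to $\mathbb{R}^k$. Composing all $f_i,g_i$ with such a diffeomorphism $\phi:\mathbb{R}^k\to M$ preserves the hypotheses of (\ref{hass}): regularity is preserved because $\phi$ is a diffeomorphism, properness because $\phi$ is a homeomorphism, and connectedness and disjointness of zero sets trivially. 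Compactness of the enclosed regions also transfers back through $\phi$. So it suffices to work on $\mathbb{R}^k$.

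\textbf{Induction.} The case $J=[k]$ is a finite set, since $H_\tau$ is proper and regular and hence $H_\tau^{-1}(0)$ is compact and discrete. The case $|J|=k-1$ is exactly Lemma \ref{ubtocm1d}. For the inductive step, fix $J$ with $|J|=m<k-1$ and $\tau$, and let $D$ be a component of $L=\bigcap_{l\in J}\{h_{l,\tau(l)}=0\}$ (or $D=\mathbb{R}^k$ if $J=\emptyset$). $D$ is a connected, closed, definable manifold of dimension $k-m\ge 2$, and by Corollary \ref{leset} it is unbounded. We restrict the $2(k-m)$ functions $f_i,g_i$ with $i\notin J$ to $D$ and apply Lemma \ref{kd}.

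\textbf{Applying Lemma \ref{kd}.} Lemma \ref{kd} is stated for subsets of $\mathbb{R}^k$ with $k$ pairs of functions. We use it with the index set $[k]\setminus J$. Its proof only uses connectedness, definability and continuity, so it adapts verbatim; alternatively we first map $D$ definably homeomorphically onto a subset of $\mathbb{R}^{k-m}$ via a cell decomposition. Hypotheses 1 and 2 of Lemma \ref{kd} ask that each set $\bigcap_{i\notin J\cup\{j\}}\{f_i\square 0\}\cap\{g_i\square 0\}\cap\{h_{j,\epsilon}=0\}\cap D$ be compact. Each is a closed subset of the region with index set $J\cup\{j\}$ and $\tau$ extended by $\epsilon$, which is compact by the induction hypothesis. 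Hypothesis 3 requires zero sets of the $f_i,g_i$ inside $D$ arbitrarily far out. It follows from Corollary \ref{leset} applied to $J\cup\{j\}$: every component of $L\cap\{f_j=0\}$ is nonempty and unbounded. Also, a component of $L\cap\{f_j=0\}$ that meets $D$ lies in $D$, so $D\cap\{f_j=0\}$ is unbounded; this uses Lemma \ref{whoim} to get nonemptiness on $D$ itself. Lemma \ref{unbdconn} together with local analysis gives an $r$ with $B^D_{\ge r}(0)$ connected and disjoint from the compact set $B$.

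I expect this connectedness to be the main obstacle. $D$ is a component rather than a cell, so I would cell-decompose and argue using the definable structure of $D$ near infinity. If that fails, I would note that the proof of Lemma \ref{kd} only needs connectedness of the component of $B^D_{\ge r}(0)$ containing a given point of $A$, together with that component reaching a zero set. I would secure this using the unbounded zero-set components from Corollary \ref{leset}, together with Lemma \ref{compointbdry} applied on that component.

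\textbf{Conclusion.} Lemma \ref{kd} then gives that the enclosed region intersected with $D\cap B_{\ge r}(0)$ is empty. So the region is contained in the ball $B_{\le r}(0)$ and is closed in $\mathbb{R}^k$, since $L$ is closed and the inequalities are non-strict; hence it is compact. Taking the union over the finitely many components $D$ of $L$ gives compactness for $J$, which completes the induction. The case $J=\emptyset$ yields the final statement.
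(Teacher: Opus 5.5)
Your route matches the paper's, and so does the gap you left open. You take the signs from Theorem \ref{exim} and use $|J|=k$ and $|J|=k-1$ (Lemma \ref{ubtocm1d}) as base cases. You then apply Lemma \ref{kd} on each component $D$ of $L=\bigcap_{l\in J}\{h_{l,\tau(l)}=0\}$, with hypotheses 1--2 from the induction and 3 from Corollary \ref{leset}, and transfer to $M$ by a diffeomorphism.

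The genuine gap is hypothesis 4 of Lemma \ref{kd}, which you flag but do not prove: that $B^D_{\geq r}(0)$ is connected for large $r$. For $J=\emptyset$ this is trivial ($D=\mathbb{R}^k$, $k\geq 2$), but for $0<|J|<k-1$ it is not. The paper justifies it in the first paragraph of the proof of Lemma \ref{kd} by citing Lemma \ref{unbdconn}, and your plan starts from the same lemma. That lemma only concerns cells, whereas $D$ is a component of a level set. It is also false as stated: for the $2$-cell $\mathbb{R}\times(0,1)$, the part outside any large ball has two components.

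Your fallback does not repair this. Unbounded components of $D\cap\{f_j=0\}$ need not pass through the particular component $E$ of $B^D_{\geq r}(0)$ containing a point $a\in A$. If $D$ had several ends, $E$ could lie entirely in the open set where every $f_i,g_i$ has its strict sign, and then Lemma \ref{compointbdry} has no boundary point to produce. What is needed is global information about the ends of $D$.

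The missing idea is that the hypotheses supply this information.

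\begin{itemize}
\item After your reduction to $\mathbb{R}^k$, each $H_\tau$ is a proper local diffeomorphism $\mathbb{R}^k\to\mathbb{R}^k$. Its image is open and closed, so it is surjective. Being proper, it is a finite covering map, hence a diffeomorphism since $\mathbb{R}^k$ is simply connected.
\item Since $\tau\in 2^{[k]}$, you have $L=H_\tau^{-1}(\{y: y_l=0 \text{ for } l\in J\})$. So $L$ is connected, and thus $D=L$.
\item $\Phi=\pi_{[k]\setminus J}\circ H_{\tau}|_L$ is a definable homeomorphism of $L$ onto $\mathbb{R}^d$, with $d=k-|J|\geq 2$. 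Since $L$ is closed, $\Phi$ carries bounded sets to bounded sets in both directions.
\end{itemize}

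Now run the proof of Lemma \ref{kd} with $O_R=\{x\in L:\|\Phi(x)\|\geq R\}$ in place of $B^D_{\geq r}(0)$. That proof only uses that this set is definable and connected, misses the compact set $B$, and meets $\bigcup_{i\notin J}\{f_i=0\}\cup\{g_i=0\}$.

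\begin{itemize}
\item $O_R$ is connected because $d\geq 2$.
\item Its complement in $L$ is bounded, so for large $R$ it misses $B$.
\item By hypothesis 3 and the same boundedness of the complement, it meets the zero sets.
\end{itemize}

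The conclusion is that the enclosed region inside $L$ lies in the bounded set $L\setminus O_R$, which is exactly what your induction needs.

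Alternatively, you can show that $B^L_{\geq r}(0)$ itself is connected for large $r$. Definable Sard for $\|x\|^2|_L$ rules out bounded components once $r^2$ exceeds all critical values. The complement of a bounded set in $\mathbb{R}^d$ then has only one unbounded component.
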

\begin{proof} 
We first show the case where $f_1,...,f_k,g_1,...,g_k$ are defined on $\mathbb{R}^k$.

By Theorem \ref{exim}, we can fix a sequence $(\square_{i,\epsilon}:i\in[k],\epsilon \in \{0,1\})\in\{\leq ,\geq \}^{[k]\times\{0,1\}}$ 
      such that property (\ref{c2}) holds. 
      We will prove by induction on $t=k-|J|$.
      
      \underline{$t=0$}: 
      This is Corollary \ref{uniqint}.
      
      \underline{$t=1$}: 
$J=[k]\setminus\{i\}$ for some $i\in[k]$.
Done in Lemma \ref{ubtocm1d}.

\underline{$t+1$}: 
Fix $\tau\in 2^{[k]}$ and $J\subseteq [k]$ with cardinality $k-t$.
To show that 
\[\underset{i\in[k]\setminus J}{\bigcap}
\{f_i\square_{i,0}0\}\cap\{g_i\square_{i,1}0\}\cap 
\bigcap_{l\in J}\{h_{l,\tau(l)}=0\}
\]
is compact, we want to show that for each component $C$ of $\underset{l\in J}{\bigcap}\{h_{l,\tau(l)}=0\}$, \\
$\underset{i\in[k]\setminus J}{\bigcap}
\{f_i\square_{i,0}0\}\cap\{g_i\square_{i,1}0\}\cap 
C$ is compact.
By Lemma \ref{kd},
it suffices to check that
\begin{enumerate}
          \item for all $j\in[k]\setminus J$,
         $\underset{\{i\in[k]\setminus J:i\neq j\}}{\bigcap}$
                  $\{f_i\square_{i,0}0\}\cap\{g_i\square_{i,1}0\}
          \cap\{f_j=0\}
          \cap C$ is compact;
          \item for all  $j\in [k]\setminus J$,
           $\underset{\{i\in[k]\setminus J:i\neq j\}}{\bigcap}$
                  $\{f_i\square_{i,0}0\}\cap\{g_i\square_{i,1}0\}\cap\{g_j=0\}\cap C$ is compact;
          \item for all $\delta>0$,
          \[
         B^{C}_{>\delta}(0)\cap \left(\bigcup_{i\in[k]\setminus J}
        \{ f_i=0\}
    \cup
    \{ g_i=0\}\right)
   \neq\emptyset.
          \]
    \end{enumerate}
Condition 3. is given by Corollary \ref{leset}. 
So we just check 1. and 2..

\begin{enumerate}
          \item Given $j\in[k]\setminus J$,
          \[ 
          C\cap
        \bigcap_{i\in[k]\setminus (J\cup\{j\})}
          \{f_i\square_{i,0}0\}\cap\{g_i\square_{i,1}0\}
          \cap\{f_j=0\}
          \] 
          is a closed subset  of 
          \[
          \bigcap_{i\in[k]\setminus (J\cup\{j\})}
            \{f_i\square_{i,0}0\}\cap\{g_i\square_{i,1}0\}
          \cap\bigcap_{l\in J\cup\{j\}}
          \{h_{l,\tau'(l)}=0\},
          \]
          where $\tau'$ is a function in $2^{[k]}$ such that $\tau|_{J}=\tau'|_{J}$ and 
          $\tau'(j)=0$.
          This is is compact by the inductive hypothesis.
          \item Given $j\in[k]\setminus J$,
          \[
          C\cap
 \bigcap_{i\in[k]\setminus (J\cup\{j\})}
          \{f_i\square_{i,0}0\}\cap\{g_i\square_{i,1}0\}
              \cap\{ g_j=0\}\]
              is a closed subset  of 
              \[
          \bigcap_{i\in[k]\setminus (J\cup\{j\})}
            \{f_i\square_{i,0}0\}\cap\{g_i\square_{i,1}0\}
          \cap\bigcap_{l\in J\cup\{j\}}
      \{h_{l,\tau'(l)}=0\}, 
          \]
          where $\tau'$ is an function in $2^{[k]}$ such that $\tau|_{J}=\tau'|_{J}$ and 
          $\tau'(j)=1$.
          This is is compact by inductive hypothesis.
       
    \end{enumerate}
    Hence, the inductive step holds
    and we conclude that $\underset{i\in[k]\setminus J}{\bigcap}
\{f_i\square_{i,0}0\}\cap\{g_i\square_{i,1}0\}\cap 
C$ is compact.
Since this is true for all connected components of
$\underset{l\in J}{\bigcap}\{h_{l,\tau(l)}=0\}$ 
and $\underset{l\in J}{\bigcap}\{h_{l,\tau(l)}=0\}$ has finitely many components, 
    $\underset{i\in[k]\setminus J}{\bigcap}
    \{f_i\square_{i,0}0\}\cap\{g_i\square_{i,1}0\}
    \cap 
\underset{l\in J}{\bigcap}\{h_{l,\tau(l)}=0\}$ 
is compact.
We completed the inductive step.

In the general case,
let $M$ be a $k$-cell in $\mathbb{R}^n$.
      Let $f_1,...,f_k,g_1,...,g_k$ be $C^1$ functions on $M$  definable in $\mathcal{R}$.    
     Take $h:\mathbb{R}^k\to M$  a diffeomorphism.
      Apply the theorem to the case for $\mathbb{R}^k$.
      We can choose  a sequence $(\square_{i,\epsilon}:i\in[k],\epsilon \in \{0,1\})\in\{\leq ,\geq \}^{[k]\times\{0,1\}}$
      satisfying that  
      \[
     A:= \bigcap_{1\leq i\leq k}\{z\in\mathbb{R}^k:  
     f_i\circ h(z)\square_{i,0}0
     \text{ and }
     g_i\circ h(z)\square_{i,1}0\}
      \] is compact.
      Then $h(A)$  is equal to 
      \[
    \bigcap_{1\leq i\leq k}\{x\in M:  f_i(x)\square_{i,0}0
     \text{ and }
     g_i(x)\square_{i,1}0\}.
      \]
Compactness of $h(A)$ follows immediately from continuity of $h$ and compactness of $A$.
 
\end{proof}

\newpage
\bibliographystyle{alpha}
\bibliography{ref}

\end{document}